\theoremstyle{thmstyleone}%
\newtheorem{theorem}{Theorem}
\newtheorem{corollary}{Corollary}
\newtheorem{lemma}{Lemma}
\newtheorem{proposition}{Proposition}%
\theoremstyle{thmstyletwo}%
\newtheorem{example}{Example}%
\newtheorem{remark}{Remark}%
\theoremstyle{thmstylethree}%
\newtheorem{definition}{Definition}%
\begin{document}
	
	\title[On $K$-frames for quaternionic Hilbert spaces]{On $K$-frames for Quaternionic Hilbert spaces}
	
	
	\author{\fnm{Najib} \sur{Khachiaa}}\email{khachiaa.najib@uit.ac.ma}

	\affil{\orgdiv{Laboratory Partial Differential Equations, Spectral Algebra and Geometry, Department of Mathematics}, \orgname{Faculty of Sciences, University Ibn Tofail}, \orgaddress{\city{Kenitra}, \country{Morocco}}}

	\abstract{The aim of this paper is to study \( K \)-frames for quaternionic Hilbert spaces. First, we present the quaternionic version of Douglas's theorem and then investigate \( K \)-frames for a quaternionic Hilbert space \( \mathcal{H} \), where \( K \in \mathbb{B}(\mathcal{H}) \). Given two quaternionic Hilbert spaces \( \mathcal{H}_1 \) and \( \mathcal{H}_2 \), along with two right \(\mathbb{H}\)-linear bounded operators \( K_1 \in \mathbb{B}(\mathcal{H}_1) \) and \( K_2 \in \mathbb{B}(\mathcal{H}_2) \), we study the \( K_1 \oplus K_2 \)-frames for the super space \( \mathcal{H}_1 \oplus \mathcal{H}_2 \) and their relationship with \( K_1 \)-frames and \( K_2 \)-frames for \( \mathcal{H}_1 \) and \( \mathcal{H}_2 \), respectively. We also explore the \( K_1 \oplus K_2 \)-duality in relation to \( K_1 \)-duality and \( K_2 \)-duality. }

	\keywords{K-frames, Super $K$-frames, Quaternionic Hilbert spaces, Quaternionic super Hilbert spaces}

	\pacs[MSC Classification]{42C15; 42C40; 47A05.}
	
	\maketitle

\section{Introduction and preliminaries}
Frames in quaternionic Hilbert spaces provide a robust framework for analyzing and reconstructing signals within a higher-dimensional space. As an extension of the classical frame theory, these structures facilitate efficient data representation and processing. The unique properties of quaternionic spaces offer new avenues for exploration, particularly in applications such as signal processing and communications. Recently, the theory of frames has undergone several generalizations, including g-frames and $K$-frames. These advancements have broadened the applicability of frame theory in various mathematical contexts. In this work, we will study the theory of $K$-frames for quaternic Hilbert spaces and  will also study a particular case, which is the theory of $K$-frames for  quaternionic super Hilbert spaces. The quaternionic field is an extension of the real and complex number systems, consisting of numbers known as quaternions. Quaternions are used to represent three-dimensional rotations and orientations, making them invaluable in computer graphics and robotics. Unlike real and complex numbers, quaternion multiplication is non-commutative, which adds complexity to their algebraic structure. Quaternions also provide a more efficient way to perform calculations in three-dimensional space, enhancing applications in physics and engineering.

\begin{definition}[the field of quaternions]\cite{11}
The non-commutative field of quaternions $\mathbb{H}$ is a four-dimensional real algebra with unity. In $\mathbb{H}$, $0$ denotes the null element  and $1$ denotes the identity with respect to multiplication. It also includes three so-called imaginary units, denoted by $i,j,k$. i.e.,
$$\mathbb{H}=\{a_0+a_1i+a_2j+a_3k:\; a_0,a_1,a_2,a_3\in \mathbb{R}\},$$
where $i^2=j^2=k^2=-1$, $ij=-ji=k$, $jk=-kj=i$ and $ki=-ik=j$. For each quaternion $q=a_0+a_1i+a_2j+a_3k$, we deifine the conjugate of $q$ denoted by $\overline{q}=a_0-a_1i-a_2j-a_2k \in \mathbb{H}$ and the module of $q$ denoted by $\vert q\vert $ as 
$$\vert q\vert =(\overline{q}q)^{\frac{1}{2}}=(q\overline{q})^{\frac{1}{2}}=\displaystyle{\sqrt{a_0^2+a_1^2+a_2^2+a_3^2}}.$$
For every $q\in \mathbb{H}$, $q^{-1}=\displaystyle{\frac{\overline{q}}{\vert q\vert ^2}}.$
\end{definition}

\begin{definition}[ Right quaternionic vector space]\cite{11}
A right quaternioniq vector space $V$ is a linear vector space under right scalar multiplication over the field of quaternions $\mathbb{H}$, i.e., the right scalar multiplication 
$$\begin{array}{rcl}
V\times \mathbb{H} &\rightarrow& V\\
(v,q)&\mapsto& v.q,
\end{array}$$
satisfies the following for all $u,v\in V$ and $q,p\in \mathbb{H}$:
\begin{enumerate}
\item $(v+u).q=v.q+u.q$,
\item $v.(p+q)=v.p+v.q$,
\item $v.(pq)=(v.p).q$.
\end{enumerate}
Instead of $v.q$, we often use the notation $vq$.
\end{definition}

\begin{definition}[Right quaterninoic pre-Hilbert space]\cite{11}
A right quaternionic pre-Hilbert space $\mathcal{H}$, is a right quaternionic vector space equipped with the binary mapping\\ \( \langle \cdot \mid \cdot \rangle : \mathcal{H} \times \mathcal{H} \to \mathbb{H} \) (called the Hermitian quaternionic inner product) which satisfies the following properties:

\begin{itemize}
    \item[(a)] \( \langle v_1 \mid v_2 \rangle = \overline{\langle v_2 \mid v_1 \rangle} \) for all \( v_1, v_2 \in \mathcal{H}\),
    \item[(b)] \( \langle v \mid v \rangle > 0 \) if \( v \neq 0 \),
    \item[(c)] \( \langle v \mid v_1 + v_2 \rangle = \langle v \mid v_1 \rangle + \langle v \mid v_2 \rangle \) for all \( v, v_1, v_2 \in \mathcal{H} \),
    \item[(d)] \( \langle v \mid uq \rangle = \langle v \mid u \rangle q \) for all \( v, u \in \mathcal{H} \) and \( q \in \mathbb{H} \).
\end{itemize}
\end{definition}

In view of Definition  $3$ , a right pre-Hilbert space $\mathcal{H}$ also has the property:

\begin{itemize}
    \item[(i)] $ \langle vq \mid u \rangle = \overline{q} \langle v \mid u \rangle$ for all $v, u \in \mathcal{H} $ and \( q \in \mathbb{H} \).
\end{itemize}

Let \( \mathcal{H} \) be a right quaternionic pre-Hilbert space with the Hermitian inner product \( \langle \cdot \mid \cdot \rangle \). Define the quaternionic norm \( \|\cdot\| : \mathcal{H} \to \mathbb{R}^+ \) on \( \mathcal{H} \) by
\[
\|u\| = \sqrt{\langle u \mid u \rangle}, \quad u \in \mathcal{H}, 
\]
which satisfies the following properties:
\begin{enumerate}
\item $\|uq\|=\|u\|\vert q\vert$, for all $u\in \mathcal{H}$ and $q\in \mathbb{H}$,
\item $\| u+v\|\leq \|u\|+\|v\|$,
\item $\|u\|=0\Longleftrightarrow u=0$ for $u\in \mathcal{H}$.
\end{enumerate}
\begin{definition}[ Right quaternionic Hilbert space]\cite{11}
A right quaternionic pre-Hilbert space is called a right quaternionic Hilbert space if it is complete with respect to the quaternionic norm.
\end{definition}

\begin{example}
Define $$\ell^2(\mathbb{H}):=\left\{ \{q_i\}_{i\in I}\subset \mathbb{H}:\; \displaystyle{\sum_{i\in I}\vert q_i\vert^2<\infty} \right\}.$$
$\ell^2(\mathbb{H})$ under right multiplication by quaternionic scalars together with the quaternionic inner product defined as: $\langle p\mid q\rangle:=\displaystyle{\sum_{i\in I} \overline{p_i}q_i}$ for  $p=\{p_i\}_{i\in I}$ and $q=\{q_i\}_{i\in I}\in \ell^2(\mathbb{H})$, is a right quaternionic Hilbert space.
\end{example}
\begin{theorem}[The Cauchy-Schwarz inequality]\cite{11}
If $\mathcal{H}$ is a right quaternionic Hilbert space, then for all $u,v\in \mathcal{H}$, 
$$\vert \langle u\mid v\rangle \vert\leq \|u\| \|v\|.$$
\end{theorem}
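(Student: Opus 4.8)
The plan is to mimic the classical Hilbert-space argument, being careful to respect the non-commutativity of $\mathbb{H}$ and the one-sided linearity of the inner product. First I would dispose of the degenerate case: if $v=0$ then $\langle u\mid v\rangle=0$ (using property (c) with $v_1=v_2=0$) and $\|v\|=0$, so both sides vanish and the inequality holds trivially. Hence I may assume $v\neq0$, which by property (b) gives $\|v\|^2=\langle v\mid v\rangle>0$, a strictly positive \emph{real} number (it equals its own conjugate by (a)).

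For the main case, I would start from the positivity of the norm squared of a suitable vector. Writing $s:=\langle u\mid v\rangle$ and letting $q\in\mathbb{H}$ be a scalar to be chosen, I would expand
$$0\le \langle u-vq\mid u-vq\rangle = \langle u\mid u\rangle-\langle u\mid vq\rangle-\langle vq\mid u\rangle+\langle vq\mid vq\rangle.$$
Using property (d) on the second term gives $\langle u\mid vq\rangle=sq$; using property (i) on the third gives $\langle vq\mid u\rangle=\overline{q}\,\overline{s}$; and combining (i) and (d) on the last gives $\langle vq\mid vq\rangle=\overline{q}\langle v\mid v\rangle q=\|v\|^2|q|^2$, since $\langle v\mid v\rangle$ is real. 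Because $\overline{sq}=\overline{q}\,\overline{s}$, the two middle terms sum to $2\,\mathrm{Re}(sq)$, so the inequality becomes $\|u\|^2-2\,\mathrm{Re}(sq)+\|v\|^2|q|^2\ge0$.

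The decisive step is the choice of $q$ that collapses the cross term into a real quantity. Taking $q=\overline{s}\,\lambda$ with $\lambda\in\mathbb{R}$, I get $sq=s\overline{s}\lambda=|s|^2\lambda\in\mathbb{R}$ and $|q|^2=|s|^2\lambda^2$, so the estimate reduces to the real quadratic inequality
$$\|v\|^2|s|^2\lambda^2-2|s|^2\lambda+\|u\|^2\ge0,$$
valid for every real $\lambda$. Since a real quadratic that is nonnegative for all $\lambda$ must have nonpositive discriminant, I obtain $4|s|^4-4\|v\|^2|s|^2\|u\|^2\le0$; dividing by $4|s|^2$ (the case $s=0$ being trivial) yields $|s|^2\le\|u\|^2\|v\|^2$, i.e. $|\langle u\mid v\rangle|\le\|u\|\,\|v\|$. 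The main obstacle is precisely the non-commutativity: one cannot simply divide by $\langle v\mid v\rangle$ on the appropriate side as in the complex case, and the trick of inserting $q=\overline{s}\,\lambda$ with a \emph{real} parameter $\lambda$ is exactly what reduces the quaternionic estimate to a one-variable real quadratic to which the discriminant criterion applies.
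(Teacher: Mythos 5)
Your proof is correct: the expansion of $\langle u-vq\mid u-vq\rangle\ge 0$ uses only the stated properties (a)--(d) and (i), the choice $q=\overline{s}\,\lambda$ with $\lambda\in\mathbb{R}$ legitimately reduces everything to a real quadratic in $\lambda$ with positive leading coefficient (after the trivial cases $v=0$ and $s=0$ are set aside), and the discriminant criterion then gives $|s|^2\le\|u\|^2\|v\|^2$. The paper states this theorem as a cited result from the literature and gives no proof of its own, but your argument is exactly the standard quaternionic adaptation of the classical Cauchy--Schwarz proof, so there is nothing to compare and nothing to correct.
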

\begin{definition}[orthogonality]\cite{11}
Let \(\mathcal{H} \) be a right quaternionic Hilbert space and \( A\) be a subset of \( \mathcal{H} \). Then, define the set:

\begin{itemize}
    \item \( A^{\perp} = \{ v \in \mathcal{H} : \langle v \mid u \rangle = 0 \; \forall \; u \in A \} \);
    \item \( \langle A \rangle \) as the right quaternionic vector subspace of \( \mathcal{H} \) consisting of all finite right \( \mathbb{H} \)-linear combinations of elements of \( A\).
\end{itemize}
\end{definition}

\begin{proposition}\cite{11}
Let \(\mathcal{H} \) be a right quaternionic Hilbert space and \( A\) be a subset of \( \mathcal{H} \). Then,
\begin{enumerate}
\item $A^{\perp}=\langle A\rangle^\perp=\overline{\langle A\rangle }^\perp=\overline{\langle A\rangle ^\perp}.$
\item $(A^\perp)^\perp=\overline{\langle A\rangle}.$
\item $\overline{A}\oplus A^\perp=\mathcal{H}.$
\end{enumerate}
\end{proposition}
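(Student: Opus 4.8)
The plan is to organize the three assertions so that a single projection theorem does the heavy lifting: I would prove (1) first, then (3), and deduce (2) from the other two. Two elementary facts underlie everything. First, the Cauchy--Schwarz inequality makes each functional $v \mapsto \langle v \mid u\rangle$ continuous, so every set $\{v : \langle v \mid u\rangle = 0\}$ is closed and hence $A^\perp = \bigcap_{u \in A}\{v : \langle v\mid u\rangle = 0\}$ is a closed right $\mathbb{H}$-subspace. Second, right linearity of $\langle \cdot \mid \cdot\rangle$ in its second slot (properties (c)--(d)) gives $\langle v \mid \sum_i u_i q_i\rangle = \sum_i \langle v \mid u_i\rangle q_i$, so orthogonality to $A$ coincides with orthogonality to every finite right $\mathbb{H}$-combination of elements of $A$.

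For (1) I would use the inclusions $A \subseteq \langle A\rangle \subseteq \overline{\langle A\rangle}$, which immediately give $\overline{\langle A\rangle}^{\perp} \subseteq \langle A\rangle^\perp \subseteq A^\perp$. The reverse inclusion $A^\perp \subseteq \langle A\rangle^\perp$ is exactly the linearity remark above, and $\langle A\rangle^\perp \subseteq \overline{\langle A\rangle}^{\perp}$ follows by continuity: if $v \perp \langle A\rangle$ and $w_n \to w$ with $w_n \in \langle A\rangle$, then $\langle v\mid w\rangle = \lim_n \langle v \mid w_n\rangle = 0$. This identifies the first three members as equal; the fourth member, $\overline{\langle A\rangle^\perp}$, equals $\langle A\rangle^\perp$ because orthogonal complements were already shown to be closed.

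The substance is (3), the orthogonal decomposition, which I would prove for the closed subspace $M = \overline{\langle A\rangle}$ (this is $\overline{A}$ when $A$ is itself a right subspace) by the classical closest-point method. Since the norm comes from an inner product, the parallelogram identity $\|u+v\|^2 + \|u-v\|^2 = 2\|u\|^2 + 2\|v\|^2$ holds verbatim (the cross terms collapse to $2\,\mathrm{Re}\langle u\mid v\rangle$). Given $x \in \mathcal{H}$, I would take a minimizing sequence in $M$ for $\inf_{m\in M}\|x - m\|$ and use the parallelogram identity to show it is Cauchy; completeness and closedness yield a minimizer $m$, and a variational argument---testing $\|x - (m + wq)\|^2 \ge \|x-m\|^2$ with $q = \overline{\langle x-m\mid w\rangle}\,t$ and letting $t \to 0^+$---forces $\langle x-m \mid w\rangle = 0$ for all $w \in M$, i.e. $x-m \in M^\perp$. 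Together with $M \cap M^\perp = \{0\}$ (any self-orthogonal vector vanishes), this gives $\mathcal{H} = M \oplus M^\perp$.

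Finally (2) follows formally: by (1), $A^\perp = M^\perp$ with $M = \overline{\langle A\rangle}$, so $(A^\perp)^\perp = M^{\perp\perp}$. The inclusion $M \subseteq M^{\perp\perp}$ is immediate from the definitions, and for the reverse I would take $x \in M^{\perp\perp}$, decompose $x = m + n$ via (3) with $m \in M$ and $n \in M^\perp$, note that $n = x - m \in M^{\perp\perp}$ as well, and conclude $n \in M^\perp \cap M^{\perp\perp} = \{0\}$, whence $x = m \in M$. The one genuine obstacle is the minimizer argument in (3); the remainder is bookkeeping, provided one keeps quaternionic scalars on the right throughout and remembers that $\langle\cdot\mid\cdot\rangle$ is conjugate-linear in its first argument, so that every $\mathrm{Re}$ manipulation in the variational step respects the non-commutativity of $\mathbb{H}$.
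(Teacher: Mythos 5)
The paper does not prove this proposition at all: it is imported verbatim from the cited reference \cite{11} (Ghiloni--Moretti--Perotti) and used as a known fact, so there is no in-paper argument to compare yours against. Judged on its own, your proof is correct and is the standard Hilbert-space argument transplanted to the quaternionic setting: the logical order (1) $\Rightarrow$ (3) $\Rightarrow$ (2) is sound, the parallelogram identity and the closest-point/variational step survive the non-commutativity because you keep scalars on the right and your test scalar $q=\overline{\langle x-m\mid w\rangle}\,t$ with $t>0$ real makes the cross term $\mathrm{Re}(\langle x-m\mid w\rangle q)=|\langle x-m\mid w\rangle|^2 t$ genuinely real, and the deduction of $(A^\perp)^\perp=\overline{\langle A\rangle}$ from the decomposition is routine. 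You also correctly flag the one defect in the statement as printed: for an arbitrary subset $A$ the expression $\overline{A}\oplus A^\perp$ only makes literal sense when $A$ is a right subspace, and your reformulation via $M=\overline{\langle A\rangle}$ is the right repair and is consistent with how the source states the result. No gaps.
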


\begin{theorem}\cite{11}
Let \( \mathcal{H} \) be a quaternionic Hilbert space and let \( N \) be a subset of \( \mathcal{H} \) such that, for \( z, z' \in N \), we have \( \langle z \mid z' \rangle = 0 \) if \( z \neq z' \) and \( \langle z \mid z \rangle = 1 \). Then, the following conditions are equivalent:

\begin{itemize}
    \item[(a)] For every \( u, v \in \mathcal{H} \), the series \( \sum_{z \in N} \langle u \mid z \rangle \langle z \mid v \rangle \) converges absolutely and
    \[
    \langle u \mid v \rangle = \sum_{z \in N} \langle u \mid z \rangle \langle z \mid v \rangle;
    \]
    
    \item[(b)] For every \( u \in \mathcal{H} \), \( \|u\|^2 = \displaystyle{\sum_{z \in N} |\langle z \mid u \rangle|^2 }\);
    
    \item[(c)] \( N^{\perp} = \{0\} \);
    
    \item[(d)] \( \langle N \rangle \) is dense in \( \mathcal{H} \).
\end{itemize}
\end{theorem}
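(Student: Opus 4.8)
The plan is to prove the four statements equivalent by closing the cycle $(a)\Rightarrow(b)\Rightarrow(c)\Rightarrow(d)\Rightarrow(a)$, so that each condition implies the next and the loop delivers full equivalence. Three of these links are short. For $(a)\Rightarrow(b)$ I would simply set $v=u$ in the reconstruction formula: since $\langle z\mid u\rangle=\overline{\langle u\mid z\rangle}$ by property (a) of the inner product, the summand becomes $\langle u\mid z\rangle\langle z\mid u\rangle=|\langle z\mid u\rangle|^2$, while $\langle u\mid u\rangle=\|u\|^2$, which is exactly (b). For $(b)\Rightarrow(c)$, take $u\in N^{\perp}$; then $\langle u\mid z\rangle=0$ for every $z\in N$, hence $\langle z\mid u\rangle=\overline{\langle u\mid z\rangle}=0$, so (b) forces $\|u\|^2=\sum_{z\in N}|\langle z\mid u\rangle|^2=0$ and $u=0$, giving $N^{\perp}=\{0\}$. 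For $(c)\Rightarrow(d)$ I would lean on the Proposition quoted above: applying part~3 to $A=\langle N\rangle$ gives $\overline{\langle N\rangle}\oplus\langle N\rangle^{\perp}=\mathcal{H}$, and part~1 gives $\langle N\rangle^{\perp}=N^{\perp}=\{0\}$, whence $\overline{\langle N\rangle}=\mathcal{H}$, i.e.\ $\langle N\rangle$ is dense.

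The substance of the theorem lies in $(d)\Rightarrow(a)$, and I would begin it by proving Bessel's inequality. For a finite subset $F\subseteq N$, writing $a_z:=\langle z\mid u\rangle$, I would expand
\[
\Big\| u-\sum_{z\in F} z\,a_z \Big\|^2
=\|u\|^2-\sum_{z\in F}|a_z|^2\ \ge 0 ,
\]
the cross terms collapsing because of orthonormality $\langle z\mid w\rangle=\delta_{zw}$. The delicate point here is that scalars must stay on the right, so the computation relies on property (d), $\langle v\mid uq\rangle=\langle v\mid u\rangle q$, together with the derived property (i), $\langle vq\mid u\rangle=\overline{q}\langle v\mid u\rangle$; it is these two rules that turn the mixed terms into $\sum_{z\in F}|a_z|^2$. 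Taking the supremum over finite $F$ yields $\sum_{z\in N}|\langle z\mid u\rangle|^2\le\|u\|^2<\infty$, which in particular shows that at most countably many coefficients are nonzero, so the sums over the possibly uncountable set $N$ are well defined.

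Next I would establish convergence and reconstruction. Bessel's inequality makes the net of partial sums of $Pu:=\sum_{z\in N} z\,\langle z\mid u\rangle$ Cauchy (the norm of a difference of two partial sums equals the sum of $|a_z|^2$ over the symmetric difference of the index sets), so $Pu$ converges by completeness of $\mathcal{H}$. Using continuity of the inner product (a consequence of Theorem~1, Cauchy--Schwarz), I would compute $\langle Pu\mid z\rangle=\langle z\mid u\rangle=\langle u\mid z\rangle$-conjugate, hence $\langle u-Pu\mid z\rangle=0$ for every $z\in N$, i.e.\ $u-Pu\in N^{\perp}$. Since (d) gives $N^{\perp}=\langle N\rangle^{\perp}=\overline{\langle N\rangle}^{\perp}=\mathcal{H}^{\perp}=\{0\}$ via part~1 of the Proposition, I conclude $u=Pu$. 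Finally, for arbitrary $u,v\in\mathcal{H}$, continuity of the inner product gives $\langle u\mid v\rangle=\langle u\mid Pv\rangle=\sum_{z\in N}\langle u\mid z\rangle\langle z\mid v\rangle$, and absolute convergence follows from $\sum_z|\langle u\mid z\rangle||\langle z\mid v\rangle|\le\big(\sum_z|\langle u\mid z\rangle|^2\big)^{1/2}\big(\sum_z|\langle z\mid v\rangle|^2\big)^{1/2}\le\|u\|\,\|v\|$, using the scalar Cauchy--Schwarz inequality together with Bessel; this is precisely (a).

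I expect the main obstacle to be purely the non-commutativity of $\mathbb{H}$: every step of the Bessel expansion and of the final reconstruction must keep the quaternionic coefficients on the correct (right) side and apply conjugation in the correct order, since an inadvertent use of the complex-Hilbert-space habit of freely commuting scalars would corrupt the identities. A secondary technical care is the unconditional (net) convergence over an index set $N$ that need not be countable, which the argument handles by reducing everything to finite subsets and invoking completeness.
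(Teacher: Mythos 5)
This theorem is quoted in the paper as a preliminary result imported from reference \cite{11}; the paper gives no proof of it, so there is no in-paper argument to compare against. Judged on its own, your cyclic scheme $(a)\Rightarrow(b)\Rightarrow(c)\Rightarrow(d)\Rightarrow(a)$ is the standard route and is essentially correct: the three short implications are fine (for $(c)\Rightarrow(d)$ one can equally use part~2 of the quoted Proposition, $(N^\perp)^\perp=\overline{\langle N\rangle}$, instead of the decomposition in part~3), and the substance of $(d)\Rightarrow(a)$ --- finite-subset Bessel inequality, Cauchy nets, completeness, then reconstruction --- correctly keeps quaternionic scalars on the right throughout. One small slip to fix: with $Pu=\sum_{z\in N} z\,\langle z\mid u\rangle$ one computes $\langle Pu\mid z'\rangle=\sum_z\overline{\langle z\mid u\rangle}\,\langle z\mid z'\rangle=\overline{\langle z'\mid u\rangle}=\langle u\mid z'\rangle$, whereas you wrote $\langle Pu\mid z'\rangle=\langle z'\mid u\rangle$; with your value $\langle u-Pu\mid z'\rangle=\langle u\mid z'\rangle-\overline{\langle u\mid z'\rangle}$ would not vanish, but with the corrected one it does, so $u-Pu\in N^\perp=\{0\}$ and the rest of your argument (reconstruction of $\langle u\mid v\rangle$ and absolute convergence via scalar Cauchy--Schwarz plus Bessel) goes through unchanged.
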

\begin{definition}\cite{11}
A subset $N$ of $\mathcal{H}$ that satisfies one of the statements in Theorem $2$ is called Hilbert basis or orthonormal basis for $\mathcal{H}$.
\end{definition}
\begin{theorem}\cite{11}
Every quaternionic Hilbert space has a Hilbert basis.\\
\end{theorem}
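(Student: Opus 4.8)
The plan is to construct a Hilbert basis by a maximality argument based on Zorn's lemma, and then to close the loop through the equivalence established in Theorem 2. First I would introduce the collection
$$\mathcal{O} = \{ N \subseteq \mathcal{H} : \langle z \mid z' \rangle = 0 \text{ for } z \neq z' \text{ in } N,\ \text{and } \langle z \mid z \rangle = 1 \text{ for all } z \in N \}$$
of all orthonormal subsets of $\mathcal{H}$, partially ordered by inclusion. This collection is nonempty, since $\emptyset \in \mathcal{O}$ (and, whenever $\mathcal{H} \neq \{0\}$, any normalized nonzero vector yields a singleton in $\mathcal{O}$).

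Next I would verify the hypothesis of Zorn's lemma, namely that every chain $\mathcal{C} \subseteq \mathcal{O}$ admits an upper bound. The natural candidate is the union $N_0 = \bigcup_{N \in \mathcal{C}} N$. Given two distinct elements $z, z' \in N_0$, they both lie in some common member of the chain because $\mathcal{C}$ is totally ordered, so the orthonormality relations are inherited; hence $N_0 \in \mathcal{O}$ and it bounds $\mathcal{C}$ from above. Zorn's lemma then furnishes a maximal element $N \in \mathcal{O}$.

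Then I would show that this maximal $N$ is a Hilbert basis by proving $N^{\perp} = \{0\}$, which by Theorem 2 (the equivalence of (c) with (a), (b), (d)) is precisely the assertion that $N$ is a Hilbert basis. Arguing by contradiction, suppose there exists $v \in N^{\perp}$ with $v \neq 0$. Since $\|v\| > 0$ is a real scalar, set $w = v\,\|v\|^{-1}$; then $\|w\| = \|v\|\,\bigl|\|v\|^{-1}\bigr| = 1$ by norm homogeneity. Moreover, for every $u \in N$, property (i) gives $\langle w \mid u \rangle = \overline{\|v\|^{-1}}\,\langle v \mid u \rangle = \|v\|^{-1} \cdot 0 = 0$, using that $\|v\|^{-1}$ is real and hence self-conjugate. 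Thus $w \in N^{\perp}$ with $\|w\| = 1$, so $N \cup \{w\}$ is an orthonormal set properly containing $N$, contradicting maximality. Hence $N^{\perp} = \{0\}$ and $N$ is a Hilbert basis.

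I expect the only genuinely delicate point to be the normalization step in the noncommutative setting: one must scale on the correct (right) side and exploit that $\|v\|$ lies in the real subfield of $\mathbb{H}$, so that conjugation fixes the scalar and orthogonality is preserved under right multiplication. Everything else — nonemptiness of $\mathcal{O}$, the chain-union argument, and the final appeal to Theorem 2 — is routine.
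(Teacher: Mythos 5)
Your proof is correct and is the standard Zorn's lemma argument (maximal orthonormal set, then $N^{\perp}=\{0\}$ via normalization of a putative nonzero orthogonal vector, concluding by the equivalence (c) of Theorem 2); the paper itself states this result as a citation to \cite{11} without proof, and your argument is precisely the expected one, including the correct handling of right scalar multiplication by the real scalar $\|v\|^{-1}$.
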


\begin{definition}[Frames]\cite{15}
Let $\{u_i\}_{i\in I}$ be a sequence in a right quaternionic Hilbert space. $\{u_i\}_{i\in I}$ is said to be Frame for $\mathcal{H}$ if there exist $0<A\leq B<\infty$ such that for all $u\in \mathcal{H}$, the following inequality holds:
$$A\|u\|^2\leq \displaystyle{\sum_{i\in I}\vert \langle u_i,u\rangle \vert^2}\leq B\|u\|^2.$$
\begin{enumerate}
\item If only the upper inequality holds, $\{u_i\}_{i\in I}$ is called a Bessel sequence for $\mathcal{H}$.
\item If $A=B=1$, $\{u_i\}_{i\in I}$ is called a Parseval frame for $\mathcal{H}$.
\end{enumerate}
\end{definition}
\begin{definition}[Right $\mathbb{H}$-linear operator]\cite{11}
Let $\mathcal{H}$ and $\mathcal{K}$ be two right quaternionic Hilbert spaces. Let  $L:\mathcal{H}\rightarrow \mathcal{K}$ be a map.
\begin{enumerate}
\item  $L$ is said to be right $\mathbb{H}$-linear operator if $L(uq+vp)=L(u)q+L(v)p$ for all $u,v\in \mathcal{H}$ and $p,q\in \mathcal{H}$.
\item  If $L$ is  a right $\mathbb{H}$-linear operator. $L$ is continuous if and only if $L$ is bounded; i.e., there exists $M> 0$ such that for all $u\in \mathcal{H}$, 
$$\| Lu\|\leq M\| u\|.$$
We denote $\mathbb{B}(\mathcal{H},\mathcal{K})$ the set of all right $\mathbb{H}$-linear bounded operators from $\mathcal{H}$ to $\mathcal{K}$, and if $\mathcal{H}=\mathcal{K}$, we denote $\mathbb{B}(\mathcal{H})$ instead of $\mathbb{B}(\mathcal{H},\mathcal{H})$.
\item If $L$ is a right $\mathbb{H}$-linear  bounded operator, we define the norm of $L$ as: 
$$\|L\|=\displaystyle{\sup_{\|u\|=1}\|Lu\|}=\inf\{M>0:\; \| Lu\|\leq M\| u\|,\; \forall u\in \mathcal{H}\}.$$
And we have for all $L,M\in \mathbb{B}(\mathcal{H}), \|L+M\|\leq\|L\|+\|M\|$ and $\|MN\|\leq\|L\|\|M\|.$\\
\end{enumerate}
\end{definition}
\begin{definition}[the adjoint operator]\cite{11}
Let $\mathcal{H}$ be a right quaternionic Hilbert space and  $L\in \mathbb{B}(\mathcal{H})$. The adjoint operator of $L$, denoted $L^*$, is the unique operator in $\mathbb{B}(\mathcal{H})$ satisfying for all $u,v\in \mathcal{H}$:
$$\langle Lu\mid v\rangle=\langle u\mid L^*v\rangle.$$
\end{definition}
\begin{theorem}[ The closed graph theorem]\label{thm7} \cite{11}
Let $\mathcal{H}, \mathcal{K}$ be two right quaternionic Hilbert spaces and let $L:\mathcal{H}\rightarrow \mathcal{K}$ be a right $\mathbb{H}$-linear opeartor. If $Graph(L)$ is closed, then $L\in \mathbb{B}(\mathcal{H},\mathcal{K})$.\\
\end{theorem}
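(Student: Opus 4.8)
The plan is to reduce the quaternionic statement to the classical closed graph theorem for real Banach spaces, and then to indicate a self-contained alternative that reproves the open-mapping machinery directly inside the quaternionic category. The key observation for the reduction is that $\mathbb{R}$ sits in $\mathbb{H}$ as the central real multiples of $1$, so every right quaternionic Hilbert space is in particular a real vector space under scalar multiplication by reals, and the quaternionic norm $\|u\|=\sqrt{\langle u\mid u\rangle}$ is unchanged by this restriction. Completeness in this norm is exactly the hypothesis that $\mathcal{H}$ and $\mathcal{K}$ are quaternionic Hilbert spaces, so both are real Banach spaces. A right $\mathbb{H}$-linear map is a fortiori $\mathbb{R}$-linear, and the boundedness condition of Definition $7$ (existence of $M$ with $\|Lu\|\le M\|u\|$) coincides with real-operator boundedness, since the norm is literally the same object in both pictures.

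Next I would check that the topological hypothesis transfers. Equipping $\mathcal{H}\oplus\mathcal{K}$ with $\|(u,v)\|^2=\|u\|^2+\|v\|^2$, this norm and the product topology it induces are identical whether the summands are read as quaternionic or as real spaces, so the statement \emph{$Graph(L)$ is closed} means the same thing in both settings. The classical real-Banach-space closed graph theorem then applies verbatim and produces an $M$ with $\|Lu\|\le M\|u\|$ for all $u\in\mathcal{H}$, i.e. $L\in\mathbb{B}(\mathcal{H},\mathcal{K})$.

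For readers who prefer to remain inside the quaternionic category, I would instead argue as follows. Since $G(L):=Graph(L)$ is a closed right $\mathbb{H}$-linear subspace of the quaternionic Hilbert space $\mathcal{H}\oplus\mathcal{K}$ (it is right $\mathbb{H}$-linear because $(u,Lu)q=(uq,L(uq))$), it is itself complete, hence a quaternionic Hilbert space. The coordinate maps $\pi_1:G(L)\to\mathcal{H}$, $(u,Lu)\mapsto u$, and $\pi_2:G(L)\to\mathcal{K}$, $(u,Lu)\mapsto Lu$, are bounded right $\mathbb{H}$-linear operators, and $\pi_1$ is a bijection. Invoking the open mapping theorem for quaternionic Hilbert spaces, $\pi_1^{-1}$ is bounded, and then $L=\pi_2\circ\pi_1^{-1}$ is bounded as a composition of bounded operators.

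The main obstacle, in either route, is the underlying Baire-category and open-mapping step; everything else is bookkeeping. In the reduction approach this work is hidden inside the cited real theorem, whereas in the self-contained approach I would have to reprove it: write $\mathcal{H}=\bigcup_{n\ge 1}\overline{\pi_1(nB)}$, where $B$ is the unit ball of $G(L)$, apply Baire category (legitimate because the quaternionic norm is a genuine complete metric), and run the usual successive-approximation argument to show $\pi_1$ is open. The point worth emphasizing is that the non-commutativity of $\mathbb{H}$ never interferes: every scaling used in the covering and approximation estimates is by a positive real, and reals are central in $\mathbb{H}$, so the classical inequalities carry over unchanged.
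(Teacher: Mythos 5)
The paper offers no proof of this statement at all: it is quoted as background from the reference of Ghiloni--Moretti--Perotti, so there is nothing to match your argument against. That said, your proof is correct, and both of the routes you describe are sound. The reduction route is the cleaner of the two: restricting scalars to $\mathbb{R}\subset\mathbb{H}$ turns $\mathcal{H}$ and $\mathcal{K}$ into real Banach spaces with literally the same norm and topology, a right $\mathbb{H}$-linear map is $\mathbb{R}$-linear, closedness of the graph is a purely metric condition, and the boundedness produced by the classical closed graph theorem is word-for-word the boundedness required of an element of $\mathbb{B}(\mathcal{H},\mathcal{K})$; nothing quaternionic needs to be reproved. Your second, category-internal route ($G(L)$ closed hence complete, $\pi_1$ a bounded bijection, $L=\pi_2\circ\pi_1^{-1}$ via the open mapping theorem) is also correct, but it shifts the burden onto a quaternionic open mapping theorem that this paper likewise does not state; your observation that the Baire-category and successive-approximation steps only ever rescale by positive reals, which are central in $\mathbb{H}$, is exactly the point that makes that version go through. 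Either route would serve as a legitimate self-contained proof where the paper simply cites.
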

\begin{definition}[$K$-Frames]\cite{9}
Let $K\in \mathbb{B}(\mathcal{H})$ and $\{u_i\}_{i\in I}$ be a sequence in a right quaternionic Hilbert space. $\{u_i\}_{i\in I}$ is said to be $K$-frame for $\mathcal{H}$ if there exist $0<A\leq B<\infty$ such that for all $u\in \mathcal{H}$, the following inequality holds:
$$A\|K^*u\|^2\leq \displaystyle{\sum_{i\in I}\vert \langle u_i,u\rangle \vert^2}\leq B\|u\|^2.$$
\begin{enumerate}
\item If only the upper inequality holds, $\{u_i\}_{i\in I}$ is called a Bessel sequence for $\mathcal{H}$.
\item If $A=B=1$, $\{u_i\}_{i\in I}$ is called a Parseval $K$-frame for $\mathcal{H}$.
\end{enumerate}
\end{definition}
\begin{definition}\cite{15}
Let $\{u_i\}_{i\in I}$ be a Bessel sequence for $\mathcal{H}$. 
\begin{enumerate}
\item The pre-frame operator of $\{u_i\}_{i\in I}$ is the right $\mathbb{H}$-linear bounded operator denoted by $T$ and  defined as follows: $$\begin{array}{rcl}
T:\ell^2(\mathbb{H})&\rightarrow& \mathcal{H}\\
q:=\{q_i\}_{i\in I}&\mapsto& \displaystyle{\sum_{i\in I}u_iq_i}. 
\end{array}$$
\item The transform opeartor of $\{u_i\}_{i\in I}$, denoted by $\theta$, is the adjoint of its pre-frame operator. explicitly $\theta$ is defined as follows:
$$\begin{array}{rcl}
\theta: \mathcal{H}&\rightarrow& \ell^2(\mathbb{H})\\
u&\mapsto&\{\langle u_i,u\rangle\}_{i\in I}.
\end{array}$$
\item The frame operator of $\{u_i\}_{i\in I}$, denoted by $S$, is the composite of $T$ and $\theta$. explicitly, $S$ is defined as follows: 
$$\begin{array}{rcl}
S:\mathcal{H}&\rightarrow& \mathcal{H}\\
u&\mapsto&\displaystyle{\sum_{i\in I}u_i\langle u_i,u\rangle}.
\end{array}$$
\end{enumerate}
\end{definition}
\begin{remark}
Unlike to ordinary frames, the pre-frame operator of a $K$-frame is not surjective, the transform opeartor is not injective with closed range and the frame operator is not invertible.
\end{remark}
\section{$K$-frames for quaternionic Hilbert spaces}
In this section, let $\mathcal{H}$ be a right quaternionic Hilbert space. We will characterize $K$-frames for $\mathcal{H}$ by their associated operators and provide some general results on $K$-frames in $\mathcal{H}$.

First, we present the quaternionic version of Douglas's theorem. The proof does not differ significantly from the complex case, but this does not preclude its presentation.
\begin{theorem}[Quaternionic version of Douglas's Theorem]\label{thm1}
Let $\mathcal{H}_1,\mathcal{H}_2$ and $\mathcal{H}$ be right quaternionic Hilbert spaces and let $L\in \mathbb{B}(\mathcal{H}_1,\mathcal{H})$ and $M\in \mathbb{B}(\mathcal{H}_2,\mathcal{H})$. Then, the following statements are equivalent:
\begin{enumerate}
\item $R(L)\subset R(M)$.
\item There exists a constant $c>0$ such that $LL^*\leq MM^*c$.
\item There exists $X\in \mathbb{B}(\mathcal{H}_1,\mathcal{H}_2)$ such that $L=MX$.
\end{enumerate}
\end{theorem}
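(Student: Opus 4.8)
The plan is to prove the equivalence by the cycle of implications $(3)\Rightarrow(1)$, $(3)\Rightarrow(2)$, $(2)\Rightarrow(3)$ and $(1)\Rightarrow(3)$, which together yield $(1)\Leftrightarrow(3)\Leftrightarrow(2)$. The two implications out of $(3)$ are immediate. For $(3)\Rightarrow(1)$, if $L=MX$ then $Lx=M(Xx)\in R(M)$ for every $x\in\mathcal H_1$, so $R(L)\subset R(M)$. For $(3)\Rightarrow(2)$, from $L=MX$ we get $L^*=X^*M^*$, whence for every $u\in\mathcal H$
$$\langle LL^*u\mid u\rangle=\|L^*u\|^2=\|X^*M^*u\|^2\le\|X\|^2\|M^*u\|^2=\langle MM^*u\mid u\rangle\,\|X\|^2,$$
using $\|X^*\|=\|X\|$ and $\langle MM^*u\mid u\rangle=\|M^*u\|^2$; taking $c=\|X\|^2$ gives $LL^*\le MM^*c$. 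Since $c$ is a positive real it is central, so the side on which it is written is immaterial, and the inequality is meaningful because $LL^*$ and $MM^*$ are positive self-adjoint, making $\langle LL^*u\mid u\rangle$ a nonnegative real.

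For the main implication $(2)\Rightarrow(3)$, I would first read off the scalar inequality $\|L^*u\|^2\le c\,\|M^*u\|^2$ for all $u\in\mathcal H$ by evaluating $(2)$ on the diagonal. Then I define a map $Y_0\colon R(M^*)\to\mathcal H_1$ on the (not necessarily closed) subspace $R(M^*)\subset\mathcal H_2$ by $Y_0(M^*u):=L^*u$. This is well defined, since $M^*u=M^*v$ forces $\|L^*(u-v)\|^2\le c\,\|M^*(u-v)\|^2=0$ and hence $L^*u=L^*v$; it is right $\mathbb H$-linear because $M^*$ and $L^*$ are; and it is bounded with $\|Y_0\|\le\sqrt c$ by the same inequality. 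I then extend $Y_0$ by continuity to $\overline{R(M^*)}$ and set it equal to $0$ on $R(M^*)^\perp$, using the orthogonal decomposition $\mathcal H_2=\overline{R(M^*)}\oplus R(M^*)^\perp$ established above, to obtain $Y\in\mathbb B(\mathcal H_2,\mathcal H_1)$ with $YM^*=L^*$. Taking adjoints gives $MY^*=L$, so $X:=Y^*\in\mathbb B(\mathcal H_1,\mathcal H_2)$ is the required factor.

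For $(1)\Rightarrow(3)$, assume $R(L)\subset R(M)$. Since $M$ is injective on $\ker(M)^\perp$, for each $x\in\mathcal H_1$ we have $Lx\in R(M)$, and projecting any preimage onto $\ker(M)^\perp$ yields a unique $z_x\in\ker(M)^\perp$ with $Mz_x=Lx$; I set $Xx:=z_x$, and uniqueness makes $X$ right $\mathbb H$-linear. Boundedness I would obtain from the closed graph theorem (Theorem~\ref{thm7}): if $x_n\to x$ and $Xx_n\to y$, then $M(Xx_n)=Lx_n\to Lx$ while also $M(Xx_n)\to My$ by continuity, so $My=Lx$; as $\ker(M)^\perp$ is closed we have $y\in\ker(M)^\perp$, and uniqueness yields $Xx=y$. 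Thus $Graph(X)$ is closed, $X\in\mathbb B(\mathcal H_1,\mathcal H_2)$, and $L=MX$.

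I expect the only real work to be in $(2)\Rightarrow(3)$: verifying that $Y_0$ is well defined, bounded and right $\mathbb H$-linear, and then extending it. The sole genuinely quaternionic point is to keep every scalar multiplication on the right throughout—for instance when checking $Y_0\big((M^*u)q\big)=Y_0\big(M^*(uq)\big)=L^*(uq)=(L^*u)q$—and to note, as above, that the operator inequality in $(2)$ makes sense precisely because $LL^*$ and $MM^*$ are positive. These are exactly the places where the argument must depart, if only notationally, from the complex case.
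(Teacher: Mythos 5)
Your proposal is correct and follows essentially the same route as the paper: the same direct computation for $(3)\Rightarrow(2)$, the same closed-graph argument via the unique preimage in $\ker(M)^\perp$ for $(1)\Rightarrow(3)$, and the same densely defined operator $M^*u\mapsto L^*u$ extended by continuity and by zero on $R(M^*)^\perp$ for $(2)\Rightarrow(3)$. Your explicit checks of well-definedness and right $\mathbb{H}$-linearity are details the paper leaves implicit, but they do not change the argument.
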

\begin{proof}
Assume $3.$, i.e., there exists $X\in \mathbb{B}(\mathcal{H}_1,\mathcal{H}_2)$ such that $L=MX$. Then,  $LL^*=MXX^*M^*$. Thus, using the Cauchy-Schwarz inequality, for all $u\in \mathcal{H}$, we have: $$\langle LL^*u,u\rangle =\langle MXX^*M^*u,u\rangle \leq \|X\|^2\langle MM^*u,u\rangle.$$ Hence, $3.$ implies $2..$ It is clear that $3.$ implies $1.$. Assume $1.$, i.e., $R(L)\subset R(M)$. We can define a right $\mathbb{H}$-linear operator $X$ from $\mathcal{H}_1$ to $\mathcal{H}_2$ as follows: For $u\in \mathcal{H}_1$, $Lu\in R(L)\subset R(M)$, then there exists a unique $v\in \text{ker}(M)^\perp$ such that $Bv=Au$. Set $Xu=v$, hence $L=MX$. It remains to prove that $X$ is bounded. Let $\{u_n\}_{n\geqslant 1}\subset \mathcal{H}_1$ such that $u_n \to u$ and $Xu_n \to v$, then $Lu_n\to Lu$ and $MXu_n\to Mv$, and since $L=MX$, then $Lu_n\to Mv$. Hence, $Mv=Lu$, and since $\text{ker}(M)^\perp$ is closed, then $v\in \text{ker}(M)^\perp$, thus $Xu=v$. Hence, by the closed graph theorem, $X$ is bounded. Hence, $1.$ implies $3.$. Assume $2.$, i.e., there exists $c> 0$, such that $LL^*\leq MM^*c$. Define $D:R(M^*)\rightarrow R(L^*)$ by $D(M^*u)=L^*u$. Then $D$ is well defined and bounded since: 
\begin{equation}
\|DM^*u\|^2=\|L^*u\|^2=\langle LL^*u,u\rangle \leq c\langle MM^*u,u\rangle=c\| M^*u\|^2. 
\end{equation}
Hence, $D$ can be uniquely extended to $\overline{R(M^*)}$, and if we define $D$ on $R(M^*)^\perp$ to be zero, then $DM^*=L^*$, Hence, $L=MD^*$. Hence, $2.$ implies $3.$.
\end{proof}

The following theorem characterizes $K$-frames for a right quaternionic Hilbert spaces using the associated operators.
\begin{theorem}\label{thm2}
Let $\{u_i\}_{i\in I }$ be a Bessel sequence for $\mathcal{H}$ and $K\in \mathbb{B}(\mathcal{H})$. Then the following statements are equivalent.
\begin{enumerate}
\item $\{u_i\}_{i\in I}$ is a $K$-frame for $\mathcal{H}$.
\item $R(K)\subset R(T).$
\item There exists a constant $c>0$ such that $KK^*c\leq S$.
\item There exists $X\in \mathbb{B}(\mathcal{H},\ell^2(\mathbb{H})\,)$ such that $K=TX$.
\end{enumerate}
\end{theorem}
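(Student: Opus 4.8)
The plan is to first pin down the algebraic relationship between the frame sum and the associated operators, then reduce the $K$-frame inequality to an operator inequality, and finally read off the remaining equivalences from the quaternionic Douglas theorem (Theorem \ref{thm1}).

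First I would record the identity $\theta = T^{*}$, which is precisely how the transform operator was defined, so that $S = T\theta = TT^{*}$. For every $u \in \mathcal{H}$ this yields
\[
\sum_{i\in I}\vert\langle u_i, u\rangle\vert^{2} = \|\theta u\|^{2} = \langle TT^{*} u \mid u\rangle = \langle Su \mid u\rangle,
\]
while $\|K^{*} u\|^{2} = \langle KK^{*} u \mid u\rangle$. Since $KK^{*}$ and $S$ are self-adjoint, the scalars $\langle KK^{*} u \mid u\rangle$ and $\langle Su \mid u\rangle$ are real, so the operator inequalities appearing below are meaningful in the quaternionic setting.

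Next I would establish $(1)\Leftrightarrow(3)$. Because $\{u_i\}_{i\in I}$ is assumed Bessel, the upper frame bound holds automatically, so $\{u_i\}_{i\in I}$ is a $K$-frame exactly when there is a constant $A>0$ with $A\|K^{*}u\|^{2} \leq \sum_{i\in I}\vert\langle u_i,u\rangle\vert^{2}$ for all $u$. By the two identities above this is equivalent to $A\langle KK^{*} u \mid u\rangle \leq \langle Su \mid u\rangle$ for all $u \in \mathcal{H}$, that is, to $KK^{*}A \leq S$, which is statement $(3)$ with $c = A$. This gives the equivalence in both directions at once.

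Finally I would invoke Theorem \ref{thm1} with $\mathcal{H}_1 = \mathcal{H}$, $\mathcal{H}_2 = \ell^{2}(\mathbb{H})$, common target space $\mathcal{H}$, and operators $L = K \in \mathbb{B}(\mathcal{H})$ and $M = T \in \mathbb{B}(\ell^{2}(\mathbb{H}),\mathcal{H})$. Since $MM^{*} = TT^{*} = S$, Douglas's three equivalent conditions $R(L)\subset R(M)$, $LL^{*} \leq MM^{*}c$, and $L = MX$ with $X$ bounded translate verbatim into statements $(2)$, $(3)$, and $(4)$ — the only bookkeeping being that ``$KK^{*} \leq Sc$ for some $c>0$'' is the same as ``$KK^{*}c' \leq S$ for some $c'>0$'' upon replacing $c$ by $1/c'$. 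Combined with $(1)\Leftrightarrow(3)$, this closes the cycle. The main obstacle is therefore not any single deep step but checking that the quaternionic formalism behaves as expected: that $\theta$ really is the Hilbert-space adjoint $T^{*}$ so that $S = TT^{*}$, that $\langle KK^{*}u\mid u\rangle$ and $\langle Su\mid u\rangle$ are real so that $\leq$ is well posed, and that the constant conventions are reconciled. Once Theorem \ref{thm1} is in hand, the argument is essentially a translation rather than a fresh proof.
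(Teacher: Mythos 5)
Your proposal is correct and follows essentially the same route as the paper: reduce $(1)\Leftrightarrow(3)$ to the identity $\sum_{i\in I}\vert\langle u_i,u\rangle\vert^{2}=\langle Su, u\rangle$, then obtain $(2)\Leftrightarrow(3)\Leftrightarrow(4)$ by applying the quaternionic Douglas theorem to $L=K$ and $M=T$ with $MM^{*}=S$. Your extra care about $\theta=T^{*}$, the realness of $\langle Su,u\rangle$, and the $c\leftrightarrow 1/c$ convention only makes explicit what the paper leaves implicit.
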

\begin{proof}
Since $\{u_i\}_{i\in I}$ is a Bessel sequence, then $\{u_i\}_{i\in I}$ is a $K$-frame for $\mathcal{H}$, if and only if, there exist $c> 0$ such that for all $u\in \mathcal{H}$, $c\|K^*u\|^2\leq \displaystyle{\sum_{i\in I}\vert \langle u_i,u\rangle \vert^2}.$ Since $\displaystyle{\sum_{i\in I}\vert \langle u_i,u\rangle \vert^2}=\langle Su,u\rangle$, then $\{u_i\}_{i\in I}$ is a $K$-frame for $\mathcal{H}$, if and only if, there exist $c> 0$ such that  $KK^*c\leq S$, if and only if, $R(K)\subset R(T)$ by Theorem \ref{thm1}, if and only if there exists $X\in \mathbb{B}(\mathcal{H},\ell^2(\mathbb{H})\,)$ such that $K=TX$ (by Theorem \ref{thm1}).
\end{proof}

Given a right $\mathbb{H}$-linear bounded operator and a $K$-frame, $\{u_i\}_{i\in I}$, for $\mathcal{H}$. The following proposition demonstrates the existence of a particular Bessel sequence called a $K$-dual frame to $\{u_i\}_{i\in I}$, which, together with the $K$-frame, allows for the reconstruction of all elements of $R(K)$.
\begin{proposition}\label{prop1}\cite{9}
Let $\{u_i\}_{i\in I}$ be a Bessel sequence for $\mathcal{H}$. $\{u_i\}_{i\in I}$ is  $K$-frame for $\mathcal{H}$ if and only if  there exists a Bessel sequence $\{v_i\}_{i\in I}$ for $\mathcal{H}$ such that for all $u\in \mathcal{H}$, $Ku=\displaystyle{\sum_{i\in I}u_i\langle v_i,u\rangle}$.
Such a Bessel sequence is called a $K$-dual frame to $\{u_i\}_{i\in I}$.
\end{proposition}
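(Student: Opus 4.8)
The plan is to recognize that the reconstruction identity $Ku=\sum_{i\in I}u_i\langle v_i,u\rangle$ is nothing but an operator factorization, and then to feed it into the equivalence $(1)\Leftrightarrow(4)$ of Theorem \ref{thm2}. Concretely, if $\{v_i\}_{i\in I}$ is Bessel with transform operator $\theta_v$, then its pre-frame operator $T_v$ is bounded, $\theta_v=T_v^{*}\in\mathbb{B}(\mathcal{H},\ell^2(\mathbb{H}))$, and by definition $T\theta_v u=T\bigl(\{\langle v_i,u\rangle\}_{i\in I}\bigr)=\sum_{i\in I}u_i\langle v_i,u\rangle$ for every $u\in\mathcal{H}$. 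Hence the desired identity is exactly the equation $K=T\theta_v$. I would also record the elementary fact, to be used in both directions, that a sequence is Bessel if and only if its transform operator is bounded, since $\sum_{i\in I}|\langle v_i,u\rangle|^2=\|\theta_v u\|^2$.

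For the backward implication, I would simply assume such a Bessel $\{v_i\}_{i\in I}$ exists. Then $X:=\theta_v$ belongs to $\mathbb{B}(\mathcal{H},\ell^2(\mathbb{H}))$ and satisfies $K=TX$, so statement $(4)$ of Theorem \ref{thm2} holds; invoking $(4)\Rightarrow(1)$ gives that $\{u_i\}_{i\in I}$ is a $K$-frame. This direction is immediate once the identity is read as a factorization.

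The forward implication carries the real content. Assuming $\{u_i\}_{i\in I}$ is a $K$-frame, Theorem \ref{thm2} supplies an operator $X\in\mathbb{B}(\mathcal{H},\ell^2(\mathbb{H}))$ with $K=TX$. From $X$ I would manufacture the candidate dual by setting $v_i:=X^{*}e_i$, where $\{e_i\}_{i\in I}$ is the canonical Hilbert basis of $\ell^2(\mathbb{H})$. The crux is to verify that $\langle v_i,u\rangle=(Xu)_i$ for all $u$ and $i$: using $\langle v_i,u\rangle=\langle X^{*}e_i,u\rangle=\overline{\langle u,X^{*}e_i\rangle}=\overline{\langle Xu,e_i\rangle}$ and the definition of the inner product on $\ell^2(\mathbb{H})$, the conjugations cancel and one recovers $(Xu)_i$. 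Consequently $\theta_v u=\{(Xu)_i\}_{i\in I}=Xu$, so $\theta_v=X$ is bounded, which certifies that $\{v_i\}_{i\in I}$ is Bessel, and $\sum_{i\in I}u_i\langle v_i,u\rangle=T(Xu)=Ku$ as required.

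The main obstacle is precisely this verification $\langle v_i,u\rangle=(Xu)_i$: because the quaternionic inner product is conjugate-linear in its first argument and $\mathbb{H}$ is non-commutative, one must track the conjugations through the adjoint relation $\langle Xa,b\rangle=\langle a,X^{*}b\rangle$ and through the formula $\langle p,q\rangle=\sum_{i\in I}\overline{p_i}q_i$ on $\ell^2(\mathbb{H})$ without tacitly commuting scalars. Everything else—Besselness of $\{v_i\}_{i\in I}$ and the reconstruction identity—then follows formally from $\theta_v=X$ and $K=TX$.
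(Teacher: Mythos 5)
Your proof is correct. The paper itself gives no proof of Proposition \ref{prop1} (it is quoted from the literature), but your argument --- reading the reconstruction identity as the factorization $K=T\theta_v$, invoking the equivalence of statements $1$ and $4$ in Theorem \ref{thm2}, and manufacturing the dual $\{X^*e_i\}_{i\in I}$ via the verification $\langle X^*e_i,u\rangle=(Xu)_i$ --- is exactly the mechanism the paper uses to prove the closely related Theorem \ref{thm3}, so it matches the paper's approach in all essentials.
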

\begin{remark}
Let $\{u_i\}_{i\in I}$ be a $K$-frame for $\mathcal{H}$ and $\{v_i\}_{i\in I}$ be a $K$-dual frame to $\{u_i\}_{i\in I}$. Then:
\begin{enumerate}
\item For all $u\in \mathcal{H}$, $K^*u=\displaystyle{\sum_{i\in I}v_i\langle u_i,u\rangle}$. i.e., $\{v_i\}_{i\in I}$ is a $K^*$-frame for $\mathcal{H}$ with  $\{u_i\}_{i\in I}$ as a $K^*$-dual frame.
\item $\{u_i\}_{i\in I}$ and $\{v_i\}_{i\in I}$ are interchangeable if and only if $K$ is self-adjoint.
\end{enumerate}
\end{remark}

\begin{proposition}\label{prop2}
Let $\{u_i\}_{i\in I}$ be a frame for $\mathcal{H}$ and $K\in \mathbb{B}(\mathcal{H})$. Then, $\{Ku_i\}_{i\in I}$ is a $K$-frame for $\mathcal{H}$.
\end{proposition}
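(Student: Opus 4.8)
The plan is to verify the two defining inequalities of a $K$-frame directly, by transporting the frame inequalities of $\{u_i\}_{i\in I}$ across the adjoint of $K$. Write $A,B$ for the frame bounds of $\{u_i\}_{i\in I}$, so that $A\|w\|^2\le \sum_{i\in I}|\langle u_i,w\rangle|^2\le B\|w\|^2$ for every $w\in\mathcal{H}$. Since $K\in\mathbb{B}(\mathcal{H})$, each $Ku_i$ lies in $\mathcal{H}$, so $\{Ku_i\}_{i\in I}$ is a genuine sequence in $\mathcal{H}$ and it remains only to produce lower and upper $K$-frame bounds.

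The key step is the adjoint identity: for each index $i$ and each $u\in\mathcal{H}$, the definition of the adjoint gives $\langle Ku_i\mid u\rangle=\langle u_i\mid K^*u\rangle$, whence $|\langle Ku_i,u\rangle|=|\langle u_i,K^*u\rangle|$. Summing the squares, I obtain
\[
\sum_{i\in I}|\langle Ku_i,u\rangle|^2=\sum_{i\in I}|\langle u_i,K^*u\rangle|^2 .
\]
Applying the frame inequalities of $\{u_i\}_{i\in I}$ to the vector $w=K^*u\in\mathcal{H}$ then yields
\[
A\|K^*u\|^2\le \sum_{i\in I}|\langle Ku_i,u\rangle|^2\le B\|K^*u\|^2 .
\]
The left inequality is already the lower $K$-frame bound with constant $A$. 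For the upper bound I use the elementary estimate $\|K^*u\|\le\|K^*\|\,\|u\|=\|K\|\,\|u\|$, which upgrades the right-hand side to $B\|K\|^2\|u\|^2$. Hence $\{Ku_i\}_{i\in I}$ is a Bessel sequence with upper bound $B\|K\|^2$ and satisfies the lower $K$-frame estimate with constant $A$, so it is a $K$-frame for $\mathcal{H}$.

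I expect no serious obstacle here; once the adjoint identity is in place the argument is essentially a single substitution. The only points demanding care are the right $\mathbb{H}$-linearity convention (the identity $\langle Ku_i\mid u\rangle=\langle u_i\mid K^*u\rangle$ follows directly from the definition of $K^*$ and places no quaternionic scalar on the wrong side) and the fact that $\|K^*\|=\|K\|$, which is what keeps the Bessel bound finite. As an alternative route, the result also follows from Theorem \ref{thm2}: the pre-frame operator $T'$ of $\{Ku_i\}_{i\in I}$ factors as $T'=KT$, where $T$ is the (surjective) pre-frame operator of the frame $\{u_i\}_{i\in I}$, so $R(T')=K(R(T))=R(K)$, which gives $R(K)\subset R(T')$ and hence condition (2) of Theorem \ref{thm2}, once Bessel-ness has been checked as above.
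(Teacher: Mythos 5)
Your proof is correct, but it follows a genuinely different route from the paper's. You verify the two defining inequalities directly: the adjoint identity $\langle Ku_i\mid u\rangle=\langle u_i\mid K^*u\rangle$ turns the sum $\sum_{i\in I}|\langle Ku_i,u\rangle|^2$ into $\sum_{i\in I}|\langle u_i,K^*u\rangle|^2$, and substituting $w=K^*u$ into the frame inequalities of $\{u_i\}_{i\in I}$ yields the lower $K$-frame bound $A$ and the Bessel bound $B\|K\|^2$ explicitly. The paper instead applies $K$ to the reconstruction formula $u=\sum_{i\in I}u_i\langle S^{-1}u_i,u\rangle$ to get $Ku=\sum_{i\in I}Ku_i\langle S^{-1}u_i,u\rangle$ and then invokes Proposition \ref{prop1}, so that the canonical dual $\{S^{-1}u_i\}_{i\in I}$ is exhibited as an explicit $K$-dual frame to $\{Ku_i\}_{i\in I}$. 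Your argument is more elementary and quantitative (it produces the optimal-looking bounds $A$ and $B\|K\|^2$ and does not rely on the invertibility of $S$ or on Proposition \ref{prop1}, whose hypothesis that $\{Ku_i\}_{i\in I}$ is Bessel the paper leaves implicit); the paper's argument buys the extra information of a concrete $K$-dual, which is useful for reconstructing elements of $R(K)$. Your alternative remark via Theorem \ref{thm2} (factoring the pre-frame operator as $T'=KT$ with $T$ surjective, so $R(T')=R(K)$) is also valid and is closest in spirit to the operator-theoretic characterizations the paper develops.
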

\begin{proof}
Since $\{u_i \}_{i\in I}$ is a frame for $\mathcal{H}$, then for all $u\in \mathcal{H}$, $u=\displaystyle{\sum_{i\in I}u_i\langle S^{-1}u_i,u\rangle}$, where $S$ is the frame operator of $\{u_i\}_{i\in I}$. Then, for all $u\in \mathcal{H}$, $Ku=\displaystyle{\sum_{i\in I}Ku_i\langle S^{-1}u_i,u\rangle}$. Hence, Proposition \ref{prop1} completes the proof.
\end{proof}

Given a sequence $\{u_i\}_{i \in I}$ in $\mathcal{H}$, the following theorem characterizes all right $\mathbb{H}$-linear bounded operators $K$ for which $\{u_i\}_{i \in I}$ is a $K$-frame and provides a $K$-dual frame for $\{u_i\}_{i\in I}$.
\begin{theorem}\label{thm3}
Let $\{u_i\}_{i\in I}$ be a Bessel sequence with the pre-frame operator $T$. Then, for all $X\in \mathbb{B}(\mathcal{H},\ell^2(\mathbb{H})\,)$, $\{u_i\}_{i\in I}$ is a $TX$-frame and $\{X^*e_i\}_{i\in I}$ is a $TX$-dual frame to $\{u_i\}_{i\in I}$, where $\{e_i\}_{i\in I}$ is the standard Hilbert basis for $\ell^2(\mathbb{H})$. Conversely, if $\{u_i\}_{i\in I}$ is a $K$-frame for $\mathcal{H}$, where $K\in \mathbb{B}(\mathcal{H})$, and $\{v_i\}_{i\in I}$ is a $K$-dual frame to $\{u_i\}_{i\in I}$, then $K=TX$, where $X$ is the transform operator of $\{v_i\}_{i\in I}$.
\end{theorem}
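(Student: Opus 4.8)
The plan is to reduce both directions to the characterizations already established in Theorem \ref{thm2} and Proposition \ref{prop1}, the only genuinely new ingredient being the coordinate description of an operator $X\in\mathbb{B}(\mathcal{H},\ell^2(\mathbb{H})\,)$ in terms of its adjoint. Throughout I would exploit the identity $\theta=T^*$ relating the transform and pre-frame operators (by definition the transform operator is the adjoint of the pre-frame operator), together with the fact that the standard basis $\{e_i\}_{i\in I}$ of $\ell^2(\mathbb{H})$ is a Hilbert basis, so that the Parseval relation valid for any Hilbert basis gives $\|q\|^2=\sum_{i\in I}|\langle e_i,q\rangle|^2$ for every $q\in\ell^2(\mathbb{H})$.

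For the forward direction I would fix $X\in\mathbb{B}(\mathcal{H},\ell^2(\mathbb{H})\,)$ and set $K:=TX$. First I would record the key coordinate identity: for $u\in\mathcal{H}$ the $i$-th entry of $Xu$ equals $\langle e_i,Xu\rangle=\langle X^*e_i,u\rangle$, the second equality being the adjoint property. Applying $T$ and unfolding its definition then yields
\[
TXu=\sum_{i\in I}u_i\,\langle X^*e_i,u\rangle,\qquad u\in\mathcal{H},
\]
so the reconstruction formula of Proposition \ref{prop1} holds with $v_i=X^*e_i$. It remains to check that $\{X^*e_i\}_{i\in I}$ is a Bessel sequence: using the same coordinate identity and the Parseval relation for $\{e_i\}$,
\[
\sum_{i\in I}|\langle X^*e_i,u\rangle|^2=\sum_{i\in I}|\langle e_i,Xu\rangle|^2=\|Xu\|^2\leq\|X\|^2\,\|u\|^2,
\]
which is the required Bessel bound. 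By Proposition \ref{prop1} this simultaneously proves that $\{u_i\}_{i\in I}$ is a $TX$-frame and that $\{X^*e_i\}_{i\in I}$ is a $TX$-dual frame to it.

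The converse is even shorter. Suppose $\{u_i\}_{i\in I}$ is a $K$-frame with $K$-dual frame $\{v_i\}_{i\in I}$, and let $X$ be the transform operator of $\{v_i\}_{i\in I}$, so that $Xu=\{\langle v_i,u\rangle\}_{i\in I}$; this operator is bounded because a $K$-dual frame is by definition Bessel. Feeding this into $T$ and invoking the reconstruction formula of Proposition \ref{prop1} gives $TXu=\sum_{i\in I}u_i\langle v_i,u\rangle=Ku$ for every $u\in\mathcal{H}$, hence $K=TX$.

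The computations are routine once conventions are fixed, so the only point demanding care — and the place where the quaternionic setting could trip one up — is the bookkeeping of the non-commutative scalars: one must keep the inner product conjugate-linear in its left slot and the series $\sum_{i\in I}u_i q_i$ in the prescribed right-multiplication order, so that the adjoint identity $\langle e_i,Xu\rangle=\langle X^*e_i,u\rangle$ and the reconstruction formula align exactly. I expect no deeper obstacle beyond verifying that these identities survive the passage from $\mathbb{C}$ to $\mathbb{H}$.
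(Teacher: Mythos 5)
Your proposal is correct and follows essentially the same route as the paper: both hinge on the identity $TXu=\sum_{i\in I}u_i\langle e_i,Xu\rangle=\sum_{i\in I}u_i\langle X^*e_i,u\rangle$ for the forward direction and on the verbatim computation $Ku=\sum_{i\in I}u_i\langle v_i,u\rangle=TXu$ for the converse. The only (harmless) divergence is that the paper first gets the $TX$-frame property from Theorem \ref{thm2} via the range inclusion $R(TX)\subset R(T)$ and then reads off the dual, whereas you obtain both conclusions in one stroke from Proposition \ref{prop1} by explicitly checking the Bessel bound $\sum_{i\in I}|\langle X^*e_i,u\rangle|^2=\|Xu\|^2\leq\|X\|^2\|u\|^2$ — a detail the paper leaves implicit, so your version is if anything slightly more self-contained.
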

\begin{proof}
Let $X\in \mathbb{B}(\mathcal{H},\ell^2(\mathbb{H})\,)$. Since $R(TX)\subset R(T)$, then, by Theorem \ref{thm2}, $\{u_i\}_{i\in I}$ is a $TX$-frame for $\mathcal{H}$. We have for all $u\in \mathcal{H}$, $TXu=\displaystyle{\sum_{i\in I}u_i\langle e_i,Xu\rangle=\sum_{i\in I}u_i\langle X^*e_i,u\rangle}.$ Hence, $\{X^*e_i\}_{i\in I}$ is a $TX$-dual frame to $\{u_i\}_{i\in I}$. Conversely, assume that $\{u_i\}_{i\in I}$ is a $K$-frame for $\mathcal{H}$, where $K\in \mathbb{B}(\mathcal{H})$, and $\{v_i\}_{i\in I}$ is $K$-dual frame to $\{u_i\}_{i\in I}$. Then, for all $u\in \mathcal{H}$, $Ku=\displaystyle{\sum_{i\in I}u_i\langle v_i,u\rangle}=TXu$, where $X$ is the transform operator of $\{v_i\}_{i\in I}$.
\end{proof}
\begin{corollary}
Let $K\in \mathbb{B}(\mathcal{H})$ and $\{u_i\}_{i\in I}$ be a $K$-frame for $\mathcal{H}$. Then, for all $X\in \mathbb{B}(\mathcal{H})$, $\{u_i\}_{i\in I}$ is a $KX$-frame for $\mathcal{H}$. Moreover, if $ \{v_i\}_{i\in I}$ is $K$-dual frame to $\{u_i\}_{i\in I}$, then $\{X^*v_i\}_{i\in I}$ is a $KX$-dual frame to $\{u_i\}_{i\in I}$.
\end{corollary}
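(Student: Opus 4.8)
The plan is to derive both assertions from Theorem~\ref{thm3}, by viewing $KX$ as the pre-frame operator $T$ composed with a suitable operator into $\ell^2(\mathbb{H})$. Since $\{u_i\}_{i\in I}$ is a $K$-frame, it is in particular a Bessel sequence; let $T$ denote its pre-frame operator. Because $\{v_i\}_{i\in I}$ is a $K$-dual frame to $\{u_i\}_{i\in I}$, the converse part of Theorem~\ref{thm3} yields $K=TY$, where $Y\in\mathbb{B}(\mathcal{H},\ell^2(\mathbb{H}))$ is the transform operator of $\{v_i\}_{i\in I}$, that is $Yu=\{\langle v_i,u\rangle\}_{i\in I}$.

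Next I would fix $X\in\mathbb{B}(\mathcal{H})$ and observe that $YX\in\mathbb{B}(\mathcal{H},\ell^2(\mathbb{H}))$ with $KX=T(YX)$. Applying the forward part of Theorem~\ref{thm3} to the operator $YX$ shows at once that $\{u_i\}_{i\in I}$ is a $T(YX)=KX$-frame for $\mathcal{H}$, and that $\{(YX)^*e_i\}_{i\in I}$ is a $KX$-dual frame to $\{u_i\}_{i\in I}$, where $\{e_i\}_{i\in I}$ is the standard Hilbert basis of $\ell^2(\mathbb{H})$.

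It then remains to identify this dual frame with the claimed one. Since $(YX)^*=X^*Y^*$ and $Y^*$ is the pre-frame operator of $\{v_i\}_{i\in I}$, one has $Y^*e_i=\sum_{j\in I}v_j\langle e_j,e_i\rangle=v_i$, whence $(YX)^*e_i=X^*v_i$, which is exactly the asserted dual frame.

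I do not expect a genuine obstacle, since the statement is a corollary; the only points requiring care are bookkeeping in the quaternionic setting, namely that the composition $YX$ lands in $\mathbb{B}(\mathcal{H},\ell^2(\mathbb{H}))$ so that Theorem~\ref{thm3} applies, and that the adjoint identity $(YX)^*=X^*Y^*$ together with $Y^*e_i=v_i$ holds under the right $\mathbb{H}$-linear inner-product conventions. As an alternative route, one could verify the two claims directly: the inclusion $R(KX)\subset R(K)\subset R(T)$ gives the $KX$-frame property via Theorem~\ref{thm2}, while the computation $\sum_{i\in I}u_i\langle X^*v_i,u\rangle=\sum_{i\in I}u_i\langle v_i,Xu\rangle=K(Xu)=KXu$, combined with the fact that $\{X^*v_i\}_{i\in I}$ is Bessel (a bounded operator sends a Bessel sequence to a Bessel sequence), identifies the dual frame through Proposition~\ref{prop1}.
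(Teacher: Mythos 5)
Your proposal is correct and follows essentially the same route as the paper: the paper establishes the $KX$-frame property via the range inclusion $R(KX)\subset R(K)\subset R(T)$ and obtains the dual from $KX=T\theta X$ with $(\theta X)^*e_i=X^*\theta^*e_i=X^*v_i$, which is exactly your factorization $KX=T(YX)$ together with the identification $(YX)^*e_i=X^*v_i$. The alternative verification you sketch at the end coincides almost verbatim with the paper's argument, so there is nothing substantive to add.
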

\begin{proof}
Denote by $\{e_i\}_{i\in I}$ the standard Hilbert basis for $\ell^2(\mathbb{H})$ and  $T$ the pre-frame operator of $\{u_i\}_{i\in I}$. Since $\{u_i\}_{i\in I}$ is a $K$-frame, then $R(K)\subset R(T)$. And  since $R(KX)\subset R(K)$, then $R(KX)\subset R(T)$. Hence, $\{u_i\}_{i\in I}$ is a $KX$-frame for $\mathcal{H}$. On the other hand, we have $K=T\theta$, where $\theta$ is the transform operator of $\{v_i\}_{i\in I}$. Then $KX=T\theta X$, thus $\{(\theta X)^*e_i\}_{i\in I}$ is a $KX$-dual frame to $\{u_i\}_{i\in I}$. We have for all $i\in I$, $(\theta X)^*e_i=X^*\theta^*e_i=X^*v_i$.
\end{proof}
\begin{theorem}\label{thm4}
Let \( \mathcal{B} \) denote the set of all Bessel sequences in \( \mathcal{H} \) indexed by a
countable set \( I \). Then the map:
$$\begin{array}{rcl}
\theta : \mathcal{B} &\rightarrow& \mathbb{B}(\mathcal{H}, \ell^2(\mathbb{H}))\\
\{u_i\}_{i \in I} &\mapsto& \theta_u,
\end{array}$$
where \( \theta_u \) is the transform operator of \( \{u_i\}_{i \in I} \), is bijective. Moreover,
$$\begin{array}{rcl}
\theta^{-1} : \mathbb{B}(\mathcal{H}, \ell^2(\mathbb{H})) &\rightarrow& \mathcal{B}\\
L &\mapsto& \{L^*(v_i)\}_{i \in I},
\end{array}$$
where \( \{v_i\}_{i \in I} \) is the standard Hilbert basis for \( \ell^2(\mathbb{H}) \).
\end{theorem}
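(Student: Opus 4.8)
The plan is to establish bijectivity by exhibiting the stated map as a genuine two-sided inverse. Write $\{v_i\}_{i\in I}$ for the standard Hilbert basis of $\ell^2(\mathbb{H})$ and define the candidate inverse $\Phi\colon \mathbb{B}(\mathcal{H},\ell^2(\mathbb{H})) \to \mathcal{B}$ by $\Phi(L) = \{L^* v_i\}_{i\in I}$. It then suffices to verify three things: that $\Phi$ genuinely lands in $\mathcal{B}$ (each $\Phi(L)$ is a Bessel sequence), that $\theta\circ\Phi = \mathrm{id}$, and that $\Phi\circ\theta = \mathrm{id}$. Bijectivity of $\theta$ and the formula for $\theta^{-1}$ follow at once.

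The heart of the argument is a single adjoint computation. For $L\in\mathbb{B}(\mathcal{H},\ell^2(\mathbb{H}))$ put $w_i = L^* v_i$. Using the defining relation of the adjoint together with the conjugate symmetry of the inner product, I would show $\langle w_i \mid u\rangle = \langle L^* v_i \mid u\rangle = \langle v_i \mid Lu\rangle$ for every $u\in\mathcal{H}$; since pairing against $v_i$ in $\ell^2(\mathbb{H})$ extracts the $i$-th coordinate, this equals $(Lu)_i$. Two consequences drop out immediately. First, by the norm identity for orthonormal bases (Theorem $2$(b) applied in $\ell^2(\mathbb{H})$), $\sum_{i} \vert\langle w_i \mid u\rangle\vert^2 = \sum_i \vert(Lu)_i\vert^2 = \|Lu\|^2 \leq \|L\|^2\|u\|^2$, so $\{w_i\}$ is Bessel with bound $\|L\|^2$ and $\Phi$ indeed maps into $\mathcal{B}$. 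Second, the transform operator of $\{w_i\}$ sends $u \mapsto \{\langle w_i \mid u\rangle\}_i = \{(Lu)_i\}_i = Lu$, that is $\theta(\Phi(L)) = L$; this proves $\theta\circ\Phi = \mathrm{id}$ and, in particular, surjectivity of $\theta$.

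For the remaining identity I would use that the transform operator $\theta_u$ of a Bessel sequence $\{u_i\}$ is by definition the adjoint of its pre-frame operator $T$, so that $\theta_u^{*} = T$. Evaluating on a basis vector gives $\theta_u^{*} v_i = T v_i = \sum_{j} u_j\,(v_i)_j = u_i$, whence $\Phi(\theta_u) = \{\theta_u^{*} v_i\}_i = \{u_i\}_i$. This is exactly $\Phi\circ\theta = \mathrm{id}$, and it yields injectivity of $\theta$. Combining the two compositions shows $\theta$ is a bijection with inverse $\Phi$, which is the asserted description of $\theta^{-1}$.

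I do not anticipate a serious obstacle: the statement is the quaternionic analogue of the standard correspondence between Bessel sequences and bounded analysis operators. The only points demanding care are bookkeeping with the noncommutative conventions—keeping conjugates and scalars on the correct side when passing from $\langle L^* v_i \mid u\rangle$ to $\langle v_i \mid Lu\rangle$—and correctly invoking the Parseval-type identity of Theorem $2$ to bound the Bessel constant. Once the coordinate identity $\langle w_i \mid u\rangle = (Lu)_i$ is in hand, well-definedness of $\Phi$ and both composition identities follow essentially without further work.
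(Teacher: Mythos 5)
Your proof is correct and follows essentially the same route as the paper: both rest on the candidate inverse $L \mapsto \{L^{*}v_i\}_{i\in I}$ and the computation that its transform operator sends $u$ to $\{\langle v_i \mid Lu\rangle\}_{i\in I} = Lu$. The only differences are minor: you obtain injectivity from the identity $\Phi\circ\theta=\mathrm{id}$ (via $\theta_u^{*}v_i=u_i$) where the paper argues directly from $\theta_u=\theta_w$, and you explicitly verify that $\{L^{*}v_i\}_{i\in I}$ is Bessel with bound $\|L\|^{2}$ --- a point the paper leaves implicit.
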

\begin{proof}
Let $u=\{u_i\}_{i\in I}$, $w=\{w_i\}_{i\in I}$ be two Bessel sequences for $\mathcal{H}$. We have:
$$\begin{array}{rcl}
\theta_u=\theta_w &\Longrightarrow& \; \forall x\in \mathcal{H},\;\forall i \in I,\; \langle u_i,x\rangle=\langle w_i,x\rangle\\
&\Longrightarrow& \forall i\in I,\; u_i=w_i\\
&\Longrightarrow& u=w.
\end{array}$$
Then, $\theta$ is injective. Let $L\in \mathbb{B}(\mathcal{H}, \ell^2(\mathbb{H}))$, and set $f=\{L^*v_i\}_{i\in I}$. We have for all $u\in \mathcal{H}$, $\theta_f u=\{\langle L^*v_i,u\rangle \}_{i\in I}=\{\langle v_i,Lu\rangle \}_{i\in I}=Lu$ since $\{v_i\}_{i\in I}$ is the standard Hilbert basis for $\ell^2(\mathbb{H})$. Hence, $\theta$ is surjective, thus is bijective. Moreover: 
$$\begin{array}{rcl}
\theta^{-1} : \mathbb{B}(\mathcal{H}, \ell^2(\mathbb{H})) &\rightarrow& \mathcal{B}\\
L &\mapsto& \{L^*(v_i)\}_{i \in I},
\end{array}$$
where \( \{v_i\}_{i \in I} \) is the standard Hilbert basis for \( \ell^2(\mathbb{H}) \).
\end{proof}
\begin{remark}
The map $\theta$, defined in Theorem \ref{thm8}, is an invertible $\mathbb{R}$-linear operator.\\
\end{remark}

Theorems \ref{thm3} and \ref{thm4} lead to the following proposition.
\begin{proposition}\label{prop3}
Let $K\in \mathbb{B}(\mathcal{H})$, $\{u_i\}_{i\in I}$ be a $K$-frame for $\mathcal{H}$ and $T$ be its pre-frame operator. Then:
$$\operatorname{card}\left\{\,\{v_i\}_{i\in I} \text{ Bessel sequence $K$-dual to } \{u_i\}_{i\in I}\right \}=\operatorname{card}\{ X\in \mathbb{B}(\mathcal{H},\ell^2(\mathbb{H})): K=TX\}.$$
\end{proposition}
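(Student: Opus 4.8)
The plan is to realize the asserted equality of cardinalities as an honest bijection, obtained simply by restricting the transform‑operator map $\theta$ of Theorem \ref{thm4} to the relevant subsets. Write
$$\mathcal{D}=\left\{\{v_i\}_{i\in I}:\ \{v_i\}_{i\in I}\text{ is a Bessel sequence } K\text{-dual to }\{u_i\}_{i\in I}\right\}$$
and
$$\mathcal{X}=\left\{X\in \mathbb{B}(\mathcal{H},\ell^2(\mathbb{H})\,):\ K=TX\right\}.$$
By Theorem \ref{thm4} the map $\theta$ is a bijection from the set $\mathcal{B}$ of all Bessel sequences onto $\mathbb{B}(\mathcal{H},\ell^2(\mathbb{H})\,)$, sending a sequence to its transform operator, with inverse $L\mapsto\{L^*e_i\}_{i\in I}$ for the standard Hilbert basis $\{e_i\}_{i\in I}$ of $\ell^2(\mathbb{H})$. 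Since $\mathcal{D}\subseteq\mathcal{B}$ and $\mathcal{X}\subseteq\mathbb{B}(\mathcal{H},\ell^2(\mathbb{H})\,)$, it suffices to show that $\theta$ carries $\mathcal{D}$ \emph{onto} $\mathcal{X}$; the restriction $\theta|_{\mathcal{D}}:\mathcal{D}\to\mathcal{X}$ is then a bijection and the two cardinalities coincide.

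First I would verify the inclusion $\theta(\mathcal{D})\subseteq\mathcal{X}$. Take $\{v_i\}_{i\in I}\in\mathcal{D}$ with transform operator $\theta_v$. The converse part of Theorem \ref{thm3} asserts exactly that $K=T\theta_v$, so $\theta_v\in\mathcal{X}$. For the reverse inclusion $\mathcal{X}\subseteq\theta(\mathcal{D})$, take $X\in\mathcal{X}$, so that $K=TX$. By Theorem \ref{thm4} we have $X=\theta_v$ for the Bessel sequence $v=\theta^{-1}(X)=\{X^*e_i\}_{i\in I}$, and the forward part of Theorem \ref{thm3} states precisely that $\{X^*e_i\}_{i\in I}$ is a $TX$-dual frame to $\{u_i\}_{i\in I}$. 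As $K=TX$, this makes $v$ a $K$-dual frame, i.e. $v\in\mathcal{D}$, with $\theta(v)=X$. Hence $\theta(\mathcal{D})=\mathcal{X}$.

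Combining both inclusions, $\theta$ restricts to a surjection $\mathcal{D}\to\mathcal{X}$, and its injectivity is inherited for free from the injectivity of $\theta$ on all of $\mathcal{B}$ (Theorem \ref{thm4}); thus $\theta|_{\mathcal{D}}$ is a bijection and the equality of cardinalities follows. I do not expect any genuine obstacle here: the entire argument is bookkeeping that matches the two halves of Theorem \ref{thm3} against the two inclusions, and the only point requiring care is confirming that the restriction of $\theta$ both lands in $\mathcal{X}$ and exhausts it — which is exactly what those two halves provide.
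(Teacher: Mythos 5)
Your proof is correct and follows exactly the route the paper intends: the paper offers no written proof beyond the remark that Theorems \ref{thm3} and \ref{thm4} ``lead to'' the proposition, and your argument is precisely the spelled-out version of that, restricting the bijection $\theta$ of Theorem \ref{thm4} and matching the two halves of Theorem \ref{thm3} to the two inclusions. Nothing is missing.
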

\begin{definition}
Let $K\in \mathbb{B}(\mathcal{H})$. a $K$-frame for $\mathcal{H}$ is said to be $K$-minimal frame if its synthesis operator is injective.
\end{definition}
\begin{remark}
A $K$-minimal frame for $\mathcal{H}$ does not contain zeros. i.e., if $\{u_i\}_{i\in I}$ is a $K$-minimal frame, then for all $i\in I$, $u_i\neq 0$.
\end{remark}

The following theorem characterizes the KK-minimal frames by duality .
\begin{theorem}
Let $K\in \mathbb{B}(\mathcal{H})$, $\{u_i\}_{i\in I}$ be a $K$-frame for $\mathcal{H}$ and $T$ be its pre-frame operator. Then the following statements are equivalent.
\begin{enumerate}
\item $\{u_i\}_{i\in I}$ has a unique $K$-dual frame.
\item There exists a unique $X\in \mathbb{B}(\mathcal{H},\ell^2(\mathbb{H}))$ such that $K=TX$.
\item $\{u_i\}_{i\in I}$ is a $K$-minimal frame for $\mathcal{H}$.
\end{enumerate}
\end{theorem}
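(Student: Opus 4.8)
The plan is to establish $1\Leftrightarrow 2$ directly from Proposition \ref{prop3}, and then to prove $2\Leftrightarrow 3$ by analysing the solution set of the operator equation $K=TX$. Recall that the synthesis (pre-frame) operator of $\{u_i\}_{i\in I}$ is $T$, so statement $3$ is precisely the assertion that $\ker T=\{0\}$.

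For $1\Leftrightarrow 2$ there is essentially nothing to do: Proposition \ref{prop3} asserts that the set of Bessel sequences $K$-dual to $\{u_i\}_{i\in I}$ and the set $\{X\in\mathbb{B}(\mathcal{H},\ell^2(\mathbb{H})):K=TX\}$ have the same cardinality, the underlying bijection being the one furnished by Theorems \ref{thm3} and \ref{thm4} that sends a $K$-dual frame to its transform operator. Since having a unique $K$-dual frame means the former set is a singleton, and statement $2$ says the latter is a singleton, the two conditions are equivalent.

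For $2\Leftrightarrow 3$ I would first describe the solution set of $K=TX$. Because $\{u_i\}_{i\in I}$ is a $K$-frame, Theorem \ref{thm2} guarantees at least one solution $X_0$. Any bounded $X$ then satisfies $K=TX$ if and only if $T(X-X_0)=0$, i.e. $R(X-X_0)\subset\ker T$, so the whole solution set is the affine translate $X_0+\mathcal{N}$ where $\mathcal{N}:=\{Y\in\mathbb{B}(\mathcal{H},\ell^2(\mathbb{H})):R(Y)\subset\ker T\}$. Hence statement $2$ holds if and only if $\mathcal{N}=\{0\}$, and the task reduces to showing $\mathcal{N}=\{0\}$ precisely when $T$ is injective. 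One implication is immediate: if $\ker T=\{0\}$ then $R(Y)\subset\ker T=\{0\}$ forces $Y=0$. For the converse I would argue by contraposition: assuming $\ker T\neq\{0\}$, pick $w\in\ker T$ with $w\neq 0$, fix a unit vector $\phi\in\mathcal{H}$, and define $Y\colon\mathcal{H}\to\ell^2(\mathbb{H})$ by $Yu=w\langle\phi,u\rangle$. Then $Y$ is right $\mathbb{H}$-linear since $\langle\phi,uq\rangle=\langle\phi,u\rangle q$, bounded with $\|Y\|\le\|w\|$ by Cauchy--Schwarz, and $Y\phi=w\|\phi\|^2=w\neq 0$; moreover $Tw=0$ gives $T(wq)=(Tw)q=0$ for all $q\in\mathbb{H}$, so $R(Y)\subset w\mathbb{H}\subset\ker T$. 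Thus $Y\in\mathcal{N}\setminus\{0\}$, and $X_0$ and $X_0+Y$ are two distinct solutions of $K=TX$, contradicting uniqueness.

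The step I expect to be the main (though modest) obstacle is this converse of $2\Leftrightarrow 3$: manufacturing a genuinely nonzero element of $\mathcal{N}$ from a single nonzero kernel vector while respecting right $\mathbb{H}$-linearity. The rank-one construction $Yu=w\langle\phi,u\rangle$ resolves it, the key algebraic point being that placing the quaternionic scalar $\langle\phi,u\rangle$ on the right keeps $Y$ right-linear and keeps its range inside the right-invariant subspace $w\mathbb{H}\subset\ker T$.
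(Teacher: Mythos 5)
Your proposal is correct, and while the reduction $1\Leftrightarrow 2$ via Proposition \ref{prop3} coincides with the paper's, your treatment of $2\Leftrightarrow 3$ takes a genuinely different route. The paper argues at the level of dual frames: from a nonzero $x_0\in\ker T$ it picks an index $j$ with $\langle e_j,x_0\rangle\neq 0$, rewrites $u_j$ as a right $\mathbb{H}$-linear combination of the remaining $u_i$, and redistributes the coefficients to produce a second $K$-dual sequence $\{g_i\}$ with $g_j=0\neq X^*e_j$; this requires verifying that the modified sequence is Bessel and leans on the (not fully justified) assumption that $X^*e_i\neq 0$ for all $i$. You instead work entirely at the operator level: the solution set of $K=TX$ is the affine translate $X_0+\mathcal{N}$ with $\mathcal{N}=\{Y:R(Y)\subset\ker T\}$, so uniqueness is equivalent to $\mathcal{N}=\{0\}$, and the rank-one operator $Yu=w\langle\phi,u\rangle$ (with $w\in\ker T\setminus\{0\}$, correctly right $\mathbb{H}$-linear because the scalar sits on the right and $w\mathbb{H}\subset\ker T$ is right-invariant) shows $\mathcal{N}\neq\{0\}$ whenever $T$ is not injective. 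Your argument is shorter, avoids the Bessel-sequence verification and the unjustified nonvanishing assumption, and transports back to statement $1$ cleanly through Proposition \ref{prop3}; the paper's version, in exchange, exhibits concretely what a second dual frame looks like. Both are valid proofs of the theorem.
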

\begin{proof}
Denote by $\{e_i\}_{i\in I}$ the standard Hilbert basis for $\ell^2(\mathbb{H})$. By Proposition \ref{prop3}, $1.\Longleftrightarrow 2.$ It is clear that, if $T$ is injective, then $X\in \mathbb{B}(\mathcal{H},\ell^2(\mathbb{H}))$ is unique. Hence, $3.$ implies $2.$ Assume $2$., then $\{u_i\}_{i \in I}$ has a unique $K$-dual $\{X^*e_i\}_{i \in I}$. We can suppose that $X^*e_i \neq 0$ for all $i \in I$. Suppose, by contradiction, that $T$ is not injective. Then there exists $x_0 \neq 0$ such that $Tx_0 = 0$, i.e.
\[
\sum_{i \in I} u_i \langle e_i, x_0 \rangle = 0.
\]
Let $j \in I$ such that $\langle e_j, x_0 \rangle \neq 0$. Then $u_j = \displaystyle{\sum_{i \neq j}  u_i\alpha_i}$ where $\alpha_i = \displaystyle{\frac{\langle e_i, x_0 \rangle}{\langle e_j, x_0 \rangle}}$ for all $i \neq j$. 

It is clear that $\{\alpha_i\}_{i \in I \setminus \{j\}} \in \ell^2(\mathbb{H})$. We have for all $x \in H$,
$$\begin{array}{rcl}
Kx = \displaystyle{\sum_{i \in I} u_i \langle X^*e_i, x \rangle} &=& \displaystyle{\sum_{i \neq j} u_i \langle X^*e_i, x \rangle + u_j \langle X^*(e_j), x \rangle}\\
& =& \displaystyle{\sum_{i \neq j} u_i \langle X^*e_i, x \rangle + \sum_{i \neq j} u_i \alpha_i \langle X^*e_j, x \rangle}.
\end{array}$$

Thus,
\[
Kx = \sum_{i \neq j} u_i \langle X^*(e_i +  e_j\overline{\alpha_i}), x \rangle.
\]

Since $\{\alpha_i\}_{i \in I \setminus \{j\}} \in \ell^2(\mathbb{H})$, we easily show that $\{g_i\}_{i \in I}$ is a Bessel sequence for $H$, where $g_i = X^*(e_i + e_j\overline{\alpha_i})$ for $i \neq j$ and $g_j = 0$. Then $\{g_i\}_{i \in I}$ is another $K$-dual frame to $\{u_i\}_{i \in I}$ than $\{X^*e_i\}_{i \in I}$ since $X^*e_j \neq 0 = g_j$. Contradiction.
\end{proof}
\begin{definition}
Let $K\in \mathbb{B}(\mathcal{H})$ and  $\{u_i\}_{i\in I}$ be a Bessel sequence for $\mathcal{H}$. $\{u_i\}_{i\in I}$ is said to be $K$-orthonormal basis for $\mathcal{H}$ if the following holds:
\begin{enumerate}
\item $\{u_i\}_{i\in I}$ is an orthonormal system in $\mathcal{H}$.
\item $\{u_i\}_{i\in I}$ is a Parseval $K$-frame for $\mathcal{H}$.
\end{enumerate}
\end{definition}
The following theorem expresses the unique $K$-dual frame to  a $K$-orthonormal basis $\{u_i\}_{i\in I}$ in terms of $K$ and $\{u_i\}_{i\in I}$.
\begin{theorem}
Let $K\in \mathbb{B}(\mathcal{H})$ and $\{u_i\}_{i\in I}$ be a $K$-orthonormal basis for $\mathcal{H}$. Then, $\{u_i\}_{i\in I}$ has a unique $K$-dual frame which is exactly $\{K^*u_i\}_{i\in I}$.
\end{theorem}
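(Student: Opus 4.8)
The plan is to establish the uniqueness and the explicit form separately, relying on the characterization of $K$-minimal frames proved just above this theorem. First I would show that the pre-frame operator $T$ of $\{u_i\}_{i\in I}$ is an isometry: for $q=\{q_i\}_{i\in I}\in\ell^2(\mathbb{H})$, the orthonormality of $\{u_i\}_{i\in I}$ gives $\|Tq\|^2=\sum_{i,j}\overline{q_i}\langle u_i\mid u_j\rangle q_j=\sum_{i\in I}|q_i|^2=\|q\|^2$. In particular $T$ is injective, so $\{u_i\}_{i\in I}$ is a $K$-minimal frame, and the preceding theorem guarantees that it admits a unique $K$-dual frame. It then suffices to verify that $\{K^*u_i\}_{i\in I}$ is one such $K$-dual frame, for it will then be \emph{the} one.

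Next I would identify the orthogonal projection hidden in the Parseval condition. Writing $V=\overline{\langle\{u_i\}_{i\in I}\rangle}$ and letting $P$ be the orthogonal projection of $\mathcal{H}$ onto $V$, the fact that $\{u_i\}_{i\in I}$ is an orthonormal basis of $V$ yields $Pw=\sum_{i\in I}u_i\langle u_i,w\rangle$ for every $w\in\mathcal{H}$, and hence $\langle Pu\mid u\rangle=\sum_{i\in I}|\langle u_i,u\rangle|^2$. Combined with the Parseval $K$-frame identity $\sum_{i\in I}|\langle u_i,u\rangle|^2=\|K^*u\|^2=\langle KK^*u\mid u\rangle$, this gives $\langle Pu\mid u\rangle=\langle KK^*u\mid u\rangle$ for all $u\in\mathcal{H}$. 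Since $P$ and $KK^*$ are both self-adjoint, I would conclude $P=KK^*$.

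Having $P=KK^*$, I would pin down the range of $K$. Because $\ker(KK^*)=\ker K^*$ (if $KK^*u=0$ then $\|K^*u\|^2=\langle KK^*u\mid u\rangle=0$), one obtains $\overline{R(K)}=(\ker K^*)^\perp=(\ker KK^*)^\perp=\overline{R(KK^*)}=\overline{R(P)}=V$, so $R(K)\subseteq V$. Finally, $\{K^*u_i\}_{i\in I}$ is a Bessel sequence, being the image of a Bessel sequence under the bounded operator $K^*$, and for every $u\in\mathcal{H}$ one has $\sum_{i\in I}u_i\langle K^*u_i,u\rangle=\sum_{i\in I}u_i\langle u_i,Ku\rangle=P(Ku)=Ku$, the last step using $Ku\in R(K)\subseteq V=R(P)$. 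By Proposition~\ref{prop1}, $\{K^*u_i\}_{i\in I}$ is therefore a $K$-dual frame to $\{u_i\}_{i\in I}$, and by the uniqueness from the first step it is the unique one.

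The hard part will be the implication $\langle Pu\mid u\rangle=\langle KK^*u\mid u\rangle\ (\forall u)\Rightarrow P=KK^*$, i.e.\ a polarization argument in the non-commutative quaternionic setting, where the usual complex identity is not directly available. For the self-adjoint operator $D=P-KK^*$ with $\langle Du\mid u\rangle=0$ for all $u$, expanding $\langle D(u+v)\mid u+v\rangle$ and using self-adjointness (so that $\langle Dv\mid u\rangle=\overline{\langle Du\mid v\rangle}$) gives $\mathrm{Re}\,\langle Du\mid v\rangle=0$ for all $u,v$; then replacing $v$ by $v\,\overline{\langle Du\mid v\rangle}$ and invoking the right $\mathbb{H}$-linearity of the inner product turns this into $|\langle Du\mid v\rangle|^2=0$, forcing $\langle Du\mid v\rangle=0$ for all $u,v$ and hence $D=0$. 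This quaternionic polarization is the only place where care beyond the complex case is needed; the remaining steps transcribe verbatim.
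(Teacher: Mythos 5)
Your proof is correct in substance but takes a genuinely different route from the paper's. You both begin identically: orthonormality makes the pre-frame operator $T$ injective, so the preceding theorem on $K$-minimal frames yields uniqueness of the $K$-dual. From there the paper does not verify a candidate at all: it takes the unique dual $\{v_i\}_{i\in I}$, whose existence is already guaranteed by Proposition~\ref{prop1}, pairs the reproducing formula $Ku=\sum_{i}u_i\langle v_i,u\rangle$ against each $u_j$, and reads off $\langle v_j,u\rangle=\langle u_j,Ku\rangle=\langle K^*u_j,u\rangle$ directly from orthonormality, hence $v_j=K^*u_j$. This is shorter and needs neither the projection $P$, nor the identity $P=KK^*$, nor any statement about $R(K)$. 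Your verify-the-candidate approach is heavier but equally legitimate, and your quaternionic polarization argument for passing from $\langle Du\mid u\rangle=0$ to $D=0$ is sound: replacing $v$ by $v\,\overline{\langle Du\mid v\rangle}$ and using right $\mathbb{H}$-linearity is exactly the right substitute for the complex polarization identity.

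One caveat: the step $P=KK^*$ leans on reading ``Parseval $K$-frame'' as the equality $\sum_{i}|\langle u_i,u\rangle|^2=\|K^*u\|^2$. The paper's definition literally only asserts the two-sided inequality $\|K^*u\|^2\le\sum_{i}|\langle u_i,u\rangle|^2\le\|u\|^2$ obtained by setting $A=B=1$, which does not force equality, so $P=KK^*$ is not justified from the stated definitions. Fortunately you do not need it: the only consequence you use is $R(K)\subseteq V$, and that already follows from the $K$-frame property via Theorem~\ref{thm2} (which gives $R(K)\subseteq R(T)$) together with the obvious inclusion $R(T)\subseteq V$; alternatively, from $KK^*\le S=P=PP^*$ and the quaternionic Douglas theorem (Theorem~\ref{thm1}). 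With that substitution your argument goes through completely.
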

\begin{proof}
Since $\{u_i\}_{i\in i}$ is an orthonormal system, then its pre-frame operator is injective. Hence, $\{u_i\}_{i\in I}$ has a unique $K$-dual frame. Let's $\{v_i\}_{i\in I}$ be the unique $K$-dual frame to $\{u_i\}_{i\in I}$, then for all $u\in \mathcal{H}$, $Ku=\displaystyle{\sum_{i\in I}u_i\langle v_i,u\rangle}$. Let $j\in I$, then for all $u\in \mathcal{H}$, we have: 
$$\begin{array}{rcl}
\langle u_j,Ku\rangle&=&\left\langle u_j,\displaystyle{\sum_{i\in I}u_i\langle v_i,u\rangle}\right\rangle\\
&=&\displaystyle{\sum_{i\in I}\langle u_j,u_i\rangle\langle v_i,u\rangle}\\
&=& \langle v_j,u\rangle\;\; \text{ since } \{u_i\}_{i\in I} \text{ is an orthonormal system}.
\end{array}$$
Then, for all $u\in \mathcal{H}$, $\langle K^*u_j,u\rangle =\langle v_j,u\rangle$. Hence, $v_j=K^*u_j$ for all $j\in I$.
\end{proof}
\section{ $K$-frames on quaternionic super Hilbert spaces}
In this section, \( \mathcal{H}_1 \) and \( \mathcal{H}_2 \) are two right quaternionic Hilbert spaces. \( \mathcal{H}_1 \oplus \mathcal{H}_2 \) is the direct sum  \( \mathcal{H}_1 \) and \( \mathcal{H}_2 \). The space \( \mathcal{H}_1 \oplus \mathcal{H}_2 \), equipped with the inner product
\[
\langle u_1 \oplus v_1, u_2 \oplus v_2 \rangle := \langle u_1, u_2 \rangle_{\mathcal{H}_1} + \langle v_1, v_2 \rangle_{\mathcal{H}_2},
\]
for all \( u_1, u_2 \in \mathcal{H}_1 \) and \( v_1, v_2 \in \mathcal{H}_2 \), is clearly a right quaternionic Hilbert space, which we call the super right quaternionic  Hilbert space of \( \mathcal{H}_1 \) and \( \mathcal{H}_2 \). \( \mathbb{B}(\mathcal{H}_1, \mathcal{H}_2) \) denotes the collection of all right $\mathbb{H}$-linear  bounded operators from \( \mathcal{H}_1 \) to \( \mathcal{H}_2 \). For \(\mathcal{H}_1=\mathcal{H}_2=\mathcal{H}\), we denote, simply, \(\mathbb{B}(\mathcal{H})$. For \( K \in \mathbb{B}(\mathcal{H}_1) \) and \( L \in \mathbb{B}(\mathcal{H}_2) \), we denote by \( K \oplus L \) the right \(\mathbb{H}\)-linear bounded operator on \( \mathcal{H}_1 \oplus \mathcal{H}_2 \), defined for all \( u \in \mathcal{H}_1 \) and \( v\in \mathcal{H}_2 \) by:
\[
(K \oplus L)(u \oplus v) = Ku \oplus Lv.
\]

For a right $\mathbb{H}$-linear bounded operator \( L\), \( \mathcal{R}(L) \) and \( \mathcal{N}(L) \) denote the range and the kernel, respectively, of \( L\).
In what follows, without any possible confusion, all inner products are denoted by the same notation \( \langle \cdot, \cdot \rangle \) and all norms are denoted by the same notation \( \| \cdot \| \).

First, we present this proposition that demonstrates the relationship between a quaternionic super Hilbert space and the quaternionic Hilbert spaces that constitute it.
\begin{proposition}
The map
$$\begin{array}{rcl}
P_1 : \mathcal{H}_1 \oplus \mathcal{H}_2 &\longrightarrow &\mathcal{H}_1 \oplus \mathcal{H}_2\\
u \oplus v &\mapsto& u \oplus 0,
\end{array}$$
and the map
$$\begin{array}{rcl}
P_2 : \mathcal{H}_1 \oplus \mathcal{H}_2 &\longrightarrow& \mathcal{H}_1 \oplus \mathcal{H}_2\\
u \oplus v &\mapsto& 0 \oplus v,
\end{array}$$
are two orthogonal projections on \( \mathcal{H}_1 \oplus \mathcal{H}_2 \) with \( R(P_1) = \mathcal{H}_1 \oplus 0 \) and \( R(P_2) = 0 \oplus \mathcal{H}_2 \).
\end{proposition}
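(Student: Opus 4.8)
The plan is to verify directly that each $P_i$ fits the standard characterization of an orthogonal projection, namely that it is a right $\mathbb{H}$-linear bounded operator satisfying $P_i^2 = P_i$ and $P_i^* = P_i$, and then to read off the ranges from the defining formulas. Since the two maps play symmetric roles, I would carry out every step in detail for $P_1$ only and note that the argument for $P_2$ is identical after swapping the roles of the two summands.

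First I would check right $\mathbb{H}$-linearity. Using the componentwise description of the module structure on $\mathcal{H}_1 \oplus \mathcal{H}_2$, for $u_1 \oplus v_1,\, u_2 \oplus v_2$ and $p,q \in \mathbb{H}$ one has $(u_1 \oplus v_1)q + (u_2 \oplus v_2)p = (u_1 q + u_2 p)\oplus(v_1 q + v_2 p)$, so applying $P_1$ and comparing with $P_1(u_1\oplus v_1)q + P_1(u_2\oplus v_2)p$ gives equality at once. Boundedness is immediate from the norm identity on the super space: since $\|P_1(u\oplus v)\|^2 = \|u\|^2 \le \|u\|^2 + \|v\|^2 = \|u\oplus v\|^2$, we get $\|P_1\| \le 1$, so $P_1 \in \mathbb{B}(\mathcal{H}_1\oplus\mathcal{H}_2)$. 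Idempotency is equally direct: $P_1^2(u\oplus v) = P_1(u\oplus 0) = u\oplus 0 = P_1(u\oplus v)$.

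The only step requiring a genuine computation is self-adjointness, which I would establish through the definition of the adjoint (the operator $P_1^*$ satisfying $\langle P_1 x, y\rangle = \langle x, P_1^* y\rangle$). Expanding both sides against the super inner product, $\langle P_1(u_1\oplus v_1),\, u_2\oplus v_2\rangle = \langle u_1, u_2\rangle + \langle 0, v_2\rangle = \langle u_1, u_2\rangle$, while $\langle u_1\oplus v_1,\, P_1(u_2\oplus v_2)\rangle = \langle u_1, u_2\rangle + \langle v_1, 0\rangle = \langle u_1, u_2\rangle$; the two agree, giving $P_1^* = P_1$. Finally, the range is read off directly: $R(P_1) = \{u\oplus 0 : u\in\mathcal{H}_1\} = \mathcal{H}_1\oplus 0$, since every output has this form and every such element is the image of itself.

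I do not expect a serious obstacle here; the statement is essentially a bookkeeping exercise. The one point to handle carefully is that the inner product is $\mathbb{H}$-valued and only right $\mathbb{H}$-linear, so when writing out the self-adjointness identity I would be careful to use the super inner product exactly as defined (sum of the two component inner products) and to invoke the conjugate-symmetry property rather than assuming any complex-case shortcut; beyond that, the non-commutativity of $\mathbb{H}$ never actually intervenes because the computations involve only the zero vector in one slot.
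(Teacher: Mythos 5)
Your proposal is correct and follows essentially the same route as the paper's proof: verify idempotency, establish self-adjointness by expanding the super inner product componentwise, and read off the ranges. The only difference is that you also spell out right $\mathbb{H}$-linearity and boundedness, which the paper takes as evident; this is harmless extra care and changes nothing substantive.
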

\begin{proof}
We have, clearly, \( P_1^2 = P_1 \) and \( P_2^2 = P_2 \). On the other hand, for \( u \oplus v, a \oplus b \in \mathcal{H}_1 \oplus \mathcal{H}_2 \), we have:
\[
\langle P_1(u \oplus v), a \oplus b \rangle = \langle u \oplus 0, a \oplus b \rangle
= \langle u, a \rangle + \langle 0, b \rangle
= \langle u, a \rangle + \langle v, 0 \rangle
= \langle u \oplus v, a \oplus 0 \rangle.
\]
Hence, for all \( a \oplus b \in \mathcal{H}_1 \oplus \mathcal{H}_2 \), \( P_1^*(a \oplus b) = a \oplus 0 = P(a \oplus b) \). Then \( P_1^* = P_1 \).

Similarly, we show that \( P_2^* = P_2 \). Thus, \( P_1 \) and \( P_2 \) are orthogonal projections.

It is clear that \( R(P_1) = \{ u \oplus 0 \,|\, u \in \mathcal{H}_1 \} := \mathcal{H}_1 \oplus 0 \) and \( R(P_2) = \{ 0 \oplus v \,|\, v \in \mathcal{H}_2 \} := 0 \oplus \mathcal{H}_2 \).
\end{proof}
The following proposition states that a sequence in $\mathcal{H}_1\oplus \mathcal{H}_2$ is a Bessel sequence if and only if each component is a Bessel sequence for the associated space.
\begin{proposition}\label{prop6}
Let $\{u_i\}_{i \in I}$ and $\{v_i\}_{i \in  I}$ be two sequences in $\mathcal{H}_1$ and $\mathcal{H}_2$ respectively. The following statements are equivalent:
\begin{enumerate}
    \item $\{u_i \oplus v_i\}_{i \in I}$ is a Bessel sequence for $\mathcal{H}_1 \oplus \mathcal{H}_2$.
    \item $\{u_i\}_{i \in I}$ and $\{v_i\}_{i \in I}$ are two Bessel sequences for $\mathcal{H}_1$ and $\mathcal{H}_2$ respectively.
\end{enumerate}
\end{proposition}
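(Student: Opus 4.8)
The plan is to reduce both implications to the Bessel inequality on the factor spaces by exploiting the two basic identities that come from the super-space inner product defined at the start of this section: for any $a \in \mathcal{H}_1$, $b \in \mathcal{H}_2$ and any index $i$,
\[
\langle u_i \oplus v_i, a \oplus b\rangle = \langle u_i, a\rangle + \langle v_i, b\rangle, \qquad \|a \oplus b\|^2 = \|a\|^2 + \|b\|^2.
\]
Every estimate below is obtained by feeding a suitably chosen vector of $\mathcal{H}_1 \oplus \mathcal{H}_2$ into these two identities, so no machinery beyond the definition of a Bessel sequence is needed.

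For the implication $(1) \Rightarrow (2)$, I would test the Bessel inequality for $\{u_i \oplus v_i\}_{i \in I}$ on vectors of the special form $a \oplus 0$ and $0 \oplus b$. If $B$ is a Bessel bound for $\{u_i \oplus v_i\}_{i \in I}$, then taking $w = a \oplus 0$ annihilates the second summand in the inner-product identity, leaving $\sum_{i \in I} |\langle u_i, a\rangle|^2 \le B\|a\|^2$, so $\{u_i\}_{i \in I}$ is Bessel with the same bound $B$; symmetrically, $w = 0 \oplus b$ gives the Bessel condition for $\{v_i\}_{i \in I}$. This direction is essentially immediate.

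For the converse $(2) \Rightarrow (1)$, suppose $\{u_i\}_{i \in I}$ and $\{v_i\}_{i \in I}$ are Bessel with respective bounds $B_1$ and $B_2$. Using the inner-product identity together with the elementary quaternionic estimate $|p + q|^2 \le 2|p|^2 + 2|q|^2$, I would bound
\[
\sum_{i \in I} |\langle u_i \oplus v_i, a \oplus b\rangle|^2 \le 2\sum_{i \in I}|\langle u_i, a\rangle|^2 + 2\sum_{i \in I}|\langle v_i, b\rangle|^2 \le 2B_1\|a\|^2 + 2B_2\|b\|^2,
\]
and then dominate the right-hand side by $2\max(B_1, B_2)\big(\|a\|^2 + \|b\|^2\big) = 2\max(B_1, B_2)\|a \oplus b\|^2$. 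The finiteness of the series on the left is automatic, since its partial sums are dominated by the finite quantity on the right. This exhibits $\{u_i \oplus v_i\}_{i \in I}$ as a Bessel sequence with bound $2\max(B_1, B_2)$.

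The argument is routine rather than delicate. The only point that requires a moment's care is confirming that the scalar inequality $|p+q|^2 \le 2|p|^2 + 2|q|^2$ is still valid over the noncommutative field $\mathbb{H}$; this is harmless, as it rests only on the triangle inequality for the quaternionic modulus $|\cdot|$ together with $2|p||q| \le |p|^2 + |q|^2$, both of which hold verbatim for quaternions. I therefore expect no genuine obstacle beyond keeping the Bessel bounds explicit.
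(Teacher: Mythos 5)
Your proposal is correct and follows essentially the same route as the paper: testing the super Bessel inequality on vectors of the form $a\oplus 0$ and $0\oplus b$ for the forward implication (the paper phrases this via the projections $P_1,P_2$, but the computation is identical), and using $|p+q|^2\le 2(|p|^2+|q|^2)$ with the bound $2\max\{B_1,B_2\}$ for the converse. No substantive difference.
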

\begin{proof}

Assume that \(\{u_i \oplus v_i\}_{i \in I}\) is a Bessel sequence in \(\mathcal{H}_1 \oplus \mathcal{H}_2\) with Bessel bound \(B\). Then \(\{P_1(u_i \oplus v_i)\}_{i \in I} = \{u_i \oplus 0\}_{i \in I}\) is a Bessel sequence for \(\mathcal{H}_1 \oplus 0\) and \(B\) is a Bessel bound. That means that for all \(u \oplus 0 \in \mathcal{H}_1 \oplus 0\),

\[
\sum_{i\in I} |\langle  u_i \oplus 0,u \oplus 0 \rangle|^2 \leq B \|u \oplus 0\|^2.
\]

Hence, for all \(u \in \mathcal{H}_1\),

\[
\sum_{i\in I} |\langle u_i,u \rangle|^2 \leq B \|u\|^2.
\]

Then \(\{u_i\}_{i \in I}\) is a Bessel sequence in \(\mathcal{H}_1\). By taking \(P_2\) instead of \(P_1\), we prove, similarly, that \(\{v_i\}_{i \in I}\) is a Bessel sequence for \(\mathcal{H}_2\).

Conversely, assume that \(\{u_i\}_{i \in I}\) and \(\{v_i\}_{i \in I}\) are two Bessel sequences in \(\mathcal{H}_1\) and \(\mathcal{H}_2\) respectively with Bessel bounds \(B_1\) and \(B_2\) respectively. Note \(B = 2\max\{B_1, B_2\}\). Then for all \(u \oplus v \in \mathcal{H}_1 \oplus \mathcal{H}_2\), we have:

$$\begin{array}{rcl}
\displaystyle{\sum_{i\in I} |\langle u_i \oplus v_i,u \oplus v \rangle|^2} &=& \displaystyle{\sum_{i\in I} |\langle u_i,u \rangle + \langle v_i,v \rangle|^2}\\
&\leq& \displaystyle{\sum_{i\in I} 2\left(|\langle  u_i,u \rangle|^2 + |\langle v_i,v \rangle|^2\right)}\\
&\leq& 2B_1 \|u\|^2 + 2B_2 \|v\|^2\\
&\leq& B(\|u\|^2 + \|v\|^2) = B\|u \oplus v\|^2.
\end{array}$$
Hence, \(\{u_i \oplus v_i\}_{i \in I}\) is a Bessel sequence for \(\mathcal{H}_1 \oplus \mathcal{H}_2\).
\end{proof}
The following proposition expresses the operators associated with a super Bessel sequence  in terms of the operators associated with its components.
\begin{proposition}\label{prop7}
Let \(\{u_i\}_{i \in I}\) and \(\{v_i\}_{i \in I}\) be two Bessel sequences for \(\mathcal{H}_1\) and \(\mathcal{H}_2\) respectively. Let \(T_1\), \(T_2\), and \(T\) be the synthesis operators of \(\{u_i\}_{i \in I}\), \(\{v_i\}_{i \in I}\), and \(\{u_i \oplus v_i\}_{i \in I}\) respectively. Let \(\theta_1\), \(\theta_2\), and \(\theta\) be the frame transforms of \(\{u_i\}_{i \in I}\), \(\{v_i\}_{i \in I}\), and \(\{u_i \oplus v_i\}_{i \in I}\) respectively. Let \(S_1\), \(S_2\), and \(S\) be the frame operators of \(\{u_i\}_{i \in I}\), \(\{v_i\}_{i \in I}\), and \(\{u_i \oplus v_i\}_{i \in I}\) respectively. Then:

\begin{enumerate}
    \item For all \(a \in \ell^2(\mathbb{H})\), \(T(a) = T_1(a) \oplus T_2(a)\).
    \item For all \(u \oplus v \in \mathcal{H}_1 \oplus \mathcal{H}_2\), \(\theta(u \oplus v) = \theta_1(u) + \theta_2(v)\).
    \item For all \(u \oplus v \in \mathcal{H}_1 \oplus \mathcal{H}_2\), \(S(u \oplus v) = S_1(u) + T_1\theta_2(v) \oplus S_2(v) + T_2\theta_1(u)\).
\end{enumerate}
\end{proposition}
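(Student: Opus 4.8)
The plan is to verify the three identities directly from the definitions of the synthesis, transform, and frame operators, relying only on the fact—already secured by Proposition \ref{prop6}—that $\{u_i \oplus v_i\}_{i\in I}$ is itself a Bessel sequence, so that $T$, $\theta$, and $S$ are well-defined bounded operators. No deep input is needed; the work is organized so that (3) is deduced from (1) and (2) rather than computed from scratch.

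First I would establish (1). By definition the synthesis operator of $\{u_i \oplus v_i\}_{i\in I}$ sends $a = \{a_i\}_{i\in I} \in \ell^2(\mathbb{H})$ to $T(a) = \sum_{i\in I}(u_i \oplus v_i)a_i$. Applying right scalar multiplication coordinatewise gives $(u_i \oplus v_i)a_i = (u_i a_i) \oplus (v_i a_i)$, and since $\sum_{i} u_i a_i$ and $\sum_{i} v_i a_i$ both converge (the components being Bessel sequences), the series splits as $\big(\sum_{i} u_i a_i\big)\oplus\big(\sum_{i} v_i a_i\big) = T_1(a)\oplus T_2(a)$. The single point that deserves a word is the interchange of the infinite sum with the direct-sum decomposition; this is legitimate because convergence in $\mathcal{H}_1 \oplus \mathcal{H}_2$ is equivalent to simultaneous convergence in each coordinate, the norm on the super space being $\|u\oplus v\|^2 = \|u\|^2 + \|v\|^2$.

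Next, for (2), I would simply unfold $\theta(u \oplus v)$. By definition it is the sequence $\{\langle u_i \oplus v_i, u \oplus v\rangle\}_{i\in I}$, and by the defining formula for the super inner product each entry equals $\langle u_i, u\rangle + \langle v_i, v\rangle$. Recognizing $\{\langle u_i, u\rangle\}_{i\in I} = \theta_1(u)$ and $\{\langle v_i, v\rangle\}_{i\in I} = \theta_2(v)$ yields the identity $\theta(u\oplus v) = \theta_1(u) + \theta_2(v)$ in $\ell^2(\mathbb{H})$, where the sum is the (coordinatewise) sum of two elements of $\ell^2(\mathbb{H})$.

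Finally, (3) follows by composing, since the frame operator is $S = T\theta$. Substituting (2) gives $S(u\oplus v) = T\big(\theta_1(u) + \theta_2(v)\big)$, and applying (1) together with the additivity of $T_1$ and $T_2$ (each being right $\mathbb{H}$-linear, hence in particular additive) produces
\[
S(u\oplus v) = \big(T_1\theta_1(u) + T_1\theta_2(v)\big) \oplus \big(T_2\theta_1(u) + T_2\theta_2(v)\big).
\]
Using $S_1 = T_1\theta_1$ and $S_2 = T_2\theta_2$ then rewrites this as $\big(S_1(u) + T_1\theta_2(v)\big) \oplus \big(S_2(v) + T_2\theta_1(u)\big)$, which is the asserted expression. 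I do not expect a genuine obstacle: the entire argument is careful bookkeeping, and the only steps requiring attention are the coordinatewise convergence justifying (1) and keeping the mixed cross terms $T_1\theta_2$ and $T_2\theta_1$ correctly placed when expanding the composition in (3).
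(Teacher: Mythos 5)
Your proposal is correct and follows essentially the same route as the paper: parts (1) and (2) are verified directly from the definitions of the synthesis and transform operators, and part (3) is obtained by composing $S = T\theta$ and substituting the first two identities. The only difference is that you explicitly justify the coordinatewise convergence in (1), a point the paper leaves implicit.
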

\begin{proof}
\begin{enumerate}

\item Let \( q = \{q_i\}_{i \in I} \in \ell_2(\mathbb{H}) \), we have:
\[
T(a) = \sum_{i=1}^{\infty}  (u_i \oplus v_i) q_i = \sum_{i=1}^{\infty} u_i q_i \oplus \sum_{i=1}^{\infty}  v_i q_i = T_1(a) \oplus T_2(a).
\]

\item  Let \( u \oplus v \in \mathcal{H}_1 \oplus \mathcal{H}_2 \), we have:
\[
\theta(u \oplus v) = \{ \langle  u_i \oplus v_i,u \oplus v \rangle \}_{i \in I} = \{ \langle  u_i,u \rangle + \langle  v_i,v \rangle \}_{i \in I} = \{ \langle  u_i,u \rangle \}_{i \in I} + \{ \langle v_i,v \rangle \}_{i \in I} = \theta_1(u) + \theta_2(v).
\]

\item  Let \( u \oplus v \in \mathcal{H}_1 \oplus \mathcal{H}_2 \), we have:
$$\begin{array}{rcl}
S(u \oplus v) = T\theta(u \oplus v) = T(\theta_1(u) + \theta_2(v)) &=& T(\theta_1(u)) + T(\theta_2(v))\\
& =& T_1(\theta_1(u)) \oplus T_2(\theta_1(u)) + T_1(\theta_2(v)) \oplus T_2(\theta_2(v))\\
&=& S_1(u) \oplus T_2\theta_1(u) + T_1\theta_2(v) \oplus S_2(v)\\
&=&S_1(u) + T_1\theta_2(v) \oplus S_2(v) + T_2\theta_1(u).
\end{array}$$
\end{enumerate}
\end{proof}
The following theorem presents a necessary condition for a sequence in $\mathcal{H}_1\oplus \mathcal{H}_2$ to be a $K$-frame, where $K\in \mathbb{B}(\mathcal{H}_1\oplus \mathcal{H}_2)$.
\begin{theorem}
Let \( K \in \mathbb{B}(\mathcal{H}_1 \oplus \mathcal{H}_2) \) and \( \{u_i \oplus v_i\}_{i \in I} \) be a sequence in \( \mathcal{H}_1 \oplus \mathcal{H}_2 \). If \( \{u_i \oplus v_i\}_{i \in I} \) is a \( K \)-frame for \( \mathcal{H}_1 \oplus \mathcal{H}_2 \) with \( K \)-frame bounds \( A \) and \( B \), then:

i. For all \( u \in \mathcal{H}_1 \),
\[
A \|K^*_1(u)\|^2 \leq \sum_{i \in I} |\langle u_i, u \rangle|^2 \leq B \|u\|^2.
\]

ii. For all \( v \in \mathcal{H}_2 \),
\[
A \|K^*_2(v)\|^2 \leq \sum_{i \in I} |\langle v_i, v \rangle|^2 \leq B \|v\|^2.
\]

Where \( K_1 : \mathcal{H}_1 \oplus \mathcal{H}_2 \rightarrow \mathcal{H}_1 \) and \( K_2 : \mathcal{H}_1 \oplus \mathcal{H}_2 \rightarrow \mathcal{H}_2 \) are the right $\mathbb{H}$-linear  bounded operators such that for all \( u \oplus v \in \mathcal{H}_1 \oplus \mathcal{H}_2 \),
\[
K(u \oplus v) = K_1(u \oplus v) \oplus K_2(u \oplus v).
\]
\end{theorem}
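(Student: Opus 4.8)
The plan is to feed the defining $K$-frame inequality for $\mathcal{H}_1 \oplus \mathcal{H}_2$ with two families of test vectors, $u \oplus 0$ (for $u \in \mathcal{H}_1$) and $0 \oplus v$ (for $v \in \mathcal{H}_2$), and then rewrite each of the three resulting terms in quantities living on the individual factors. Concretely, for every $w \in \mathcal{H}_1 \oplus \mathcal{H}_2$ the hypothesis yields
\[
A\|K^*w\|^2 \leq \sum_{i \in I} |\langle u_i \oplus v_i, w\rangle|^2 \leq B\|w\|^2,
\]
so it suffices to evaluate these three expressions at $w = u \oplus 0$ and at $w = 0 \oplus v$.

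The only step requiring care is identifying the adjoint $K^*$ in terms of $K_1^*$ and $K_2^*$. First I would establish that $K^*(u \oplus v) = K_1^* u + K_2^* v$ for all $u \in \mathcal{H}_1$ and $v \in \mathcal{H}_2$, where $K_1 : \mathcal{H}_1 \oplus \mathcal{H}_2 \to \mathcal{H}_1$ and $K_2 : \mathcal{H}_1 \oplus \mathcal{H}_2 \to \mathcal{H}_2$ have adjoints $K_1^*, K_2^* : \mathcal{H}_j \to \mathcal{H}_1 \oplus \mathcal{H}_2$. This follows by expanding, for an arbitrary $a \oplus b \in \mathcal{H}_1 \oplus \mathcal{H}_2$,
\[
\langle K(a \oplus b), u \oplus v\rangle = \langle K_1(a \oplus b), u\rangle + \langle K_2(a \oplus b), v\rangle = \langle a \oplus b, K_1^* u\rangle + \langle a \oplus b, K_2^* v\rangle = \langle a \oplus b, K_1^* u + K_2^* v\rangle,
\]
using the definition of the inner product on the super space together with the defining relations of $K_1^*$ and $K_2^*$; here one must respect the conjugate-linearity in the first slot and the right $\mathbb{H}$-module structure, which is where the quaternionic (non-commutative) bookkeeping enters. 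By uniqueness of the adjoint this gives the claimed formula, and in particular $K^*(u \oplus 0) = K_1^* u$ and $K^*(0 \oplus v) = K_2^* v$.

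With this in hand the remainder is routine. For part i, set $w = u \oplus 0$: since $\langle u_i \oplus v_i, u \oplus 0\rangle = \langle u_i, u\rangle$ and $\|u \oplus 0\| = \|u\|$, the displayed inequality collapses to
\[
A\|K_1^* u\|^2 \leq \sum_{i \in I} |\langle u_i, u\rangle|^2 \leq B\|u\|^2.
\]
For part ii, set $w = 0 \oplus v$ and argue symmetrically using $\langle u_i \oplus v_i, 0 \oplus v\rangle = \langle v_i, v\rangle$, $\|0 \oplus v\| = \|v\|$, and $K^*(0 \oplus v) = K_2^* v$.

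I expect no serious obstacle: the single genuinely nontrivial ingredient is the adjoint computation $K^*(u \oplus v) = K_1^* u + K_2^* v$, and even that is a short verification once the inner-product conventions on the super space are pinned down. It is worth noting that the statement is only a necessary condition—the restricted test vectors $u \oplus 0$ and $0 \oplus v$ do not probe enough of $\mathcal{H}_1 \oplus \mathcal{H}_2$ to recover the full $K$-frame inequality—so no converse should be attempted here.
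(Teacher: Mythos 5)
Your proposal is correct and follows essentially the same route as the paper: specialize the $K$-frame inequality to the test vectors $u \oplus 0$ and $0 \oplus v$, and identify $K^*(u \oplus 0) = K_1^*u$ and $K^*(0 \oplus v) = K_2^*v$ via the adjoint computation against an arbitrary $a \oplus b$. Your slightly more general formula $K^*(u \oplus v) = K_1^*u + K_2^*v$ is a harmless strengthening of the same adjoint identification the paper carries out case by case.
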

\begin{proof}
We have for all \( u \oplus v \in \mathcal{H}_1 \oplus \mathcal{H}_2 \),
\[
A \| K^*(u \oplus v) \|^2 \leq 
\sum_{i \in I} 
| \langle u_i \oplus v_i, u \oplus v \rangle |^2 \leq B \| u \oplus v \|^2.
\]
i. In particular, for \( v = 0 \), we have for all \( u \in \mathcal{H}_1 \),
\[
A \| K^*(u \oplus 0) \|^2 \leq 
\sum_{i \in I} 
| \langle u_i, u \rangle |^2 \leq B \| u \|^2.
\]
Let \( a \oplus b \in \mathcal{H}_1 \oplus \mathcal{H}_2 \) and let \( u \in \mathcal{H}_1 \), we have:
\[
\langle K(a \oplus b), u \oplus 0 \rangle = \langle K_1(a \oplus b) \oplus K_2(a \oplus b), u \oplus 0 \rangle 
= \langle K_1(a \oplus b), u \rangle 
= \langle a \oplus b, K_1^*(u) \rangle.
\]
Hence \( K^*(u \oplus 0) = K_1^*(u) \). Then for all \( u \in \mathcal{H}_1 \),
\[
A \| K_1^*(u) \|^2 \leq 
\sum_{i \in I} 
| \langle u_i, u \rangle |^2 \leq B \| u \|^2.
\]
ii. In particular, for \( u = 0 \), we have for all \( v \in \mathcal{H}_2 \),
\[
A \| K^*(0 \oplus v) \|^2 \leq 
\sum_{i \in I} 
| \langle v_i, v \rangle |^2 \leq B \| v \|^2.
\]
Let \( a \oplus b \in \mathcal{H}_1 \oplus \mathcal{H}_2 \) and let \( v \in \mathcal{H}_2 \), we have:
\[
\langle K(a \oplus b), 0 \oplus v \rangle = \langle K_1(a \oplus b) \oplus K_2(a \oplus b), 0 \oplus v \rangle 
= \langle K_2(a \oplus b), v \rangle 
= \langle a \oplus b, K_2^*(v) \rangle.
\]
Hence \( K^*(0 \oplus v) = K_2^*(v) \). Then for all \( v \in \mathcal{H}_2 \),
\[
A \| K_2^*(v) \|^2 \leq 
\sum_{i \in I} 
| \langle v_i, v \rangle |^2 \leq B \| v \|^2.
\]
\(\square\)
\end{proof}
The following result shows that a $K_1\oplus K_2$-frame for $\mathcal{H}_1\oplus\mathcal{H}_2$ is necessarily the sum of a $K_1$-frame for $\mathcal{H}_1$ and a $K_2$-frame for $\mathcal{H}_2$.
\begin{corollary}\label{cor2}
Let \( K_1 \in \mathbb{B}(\mathcal{H}_1) \) and \( K_2 \in \mathbb{B}(\mathcal{H}_2) \). If a sequence \( \{ u_i \oplus v_i \}_{i \in I} \) in \( \mathcal{H}_1 \oplus \mathcal{H}_2 \) is a \( K_1 \oplus K_2 \)-frame for \( \mathcal{H}_1 \oplus \mathcal{H}_2 \), then \( \{ u_i \}_{i \in I} \) is a \( K_1 \)-frame for \( \mathcal{H}_1 \) and \( \{ v_i \}_{i \in I} \) is an \( K_2 \)-frame for \( \mathcal{H}_2 \).
\end{corollary}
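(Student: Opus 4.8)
The plan is to derive this directly from the preceding theorem by specializing its general operator $K \in \mathbb{B}(\mathcal{H}_1 \oplus \mathcal{H}_2)$ to the particular choice $K = K_1 \oplus K_2$. First I would apply that theorem to this $K$ and read off its conclusions (i) and (ii). The only genuine content to verify is the identification of the theorem's two component operators and, crucially, their adjoints, since the symbols $K_1, K_2$ now carry two meanings: the given operators on $\mathcal{H}_1$ and $\mathcal{H}_2$, and the component operators $\mathcal{H}_1 \oplus \mathcal{H}_2 \to \mathcal{H}_j$ appearing in the theorem's statement. Disentangling this reused notation is the whole task.

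For $K = K_1 \oplus K_2$, evaluating $(K_1 \oplus K_2)(u \oplus v) = K_1 u \oplus K_2 v$ shows that the theorem's first component operator acts by $u \oplus v \mapsto K_1 u$ and the second by $u \oplus v \mapsto K_2 v$. I would then compute their adjoints from the defining relation of the adjoint: for $w \in \mathcal{H}_1$,
\[
\langle (K_1 \oplus K_2)(u \oplus v), w \oplus 0 \rangle = \langle K_1 u, w \rangle = \langle u, K_1^* w \rangle = \langle u \oplus v, K_1^* w \oplus 0 \rangle,
\]
which identifies the adjoint of the first component operator as $w \mapsto K_1^* w \oplus 0$; a symmetric computation identifies the adjoint of the second as $w \mapsto 0 \oplus K_2^* w$. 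Since the norm on $\mathcal{H}_1 \oplus \mathcal{H}_2$ satisfies $\|K_1^* w \oplus 0\| = \|K_1^* w\|$ and $\|0 \oplus K_2^* w\| = \|K_2^* w\|$, the norms of the theorem's component adjoints coincide exactly with $\|K_1^*(\cdot)\|$ and $\|K_2^*(\cdot)\|$.

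Finally I would substitute these identities into the two conclusions of the theorem. Conclusion (i) then reads $A\|K_1^*(u)\|^2 \leq \sum_{i \in I} |\langle u_i, u\rangle|^2 \leq B\|u\|^2$ for all $u \in \mathcal{H}_1$, which is precisely the statement that $\{u_i\}_{i \in I}$ is a $K_1$-frame for $\mathcal{H}_1$ with bounds $A$ and $B$; conclusion (ii) yields the analogous inequality certifying that $\{v_i\}_{i \in I}$ is a $K_2$-frame for $\mathcal{H}_2$. No obstacle of substance arises: once the adjoint computation is carried out, the corollary is immediate, and the Bessel upper bounds come for free from the same inequalities.
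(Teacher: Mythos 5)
Your proposal is correct and matches the paper's intent exactly: the paper states this corollary without proof as an immediate specialization of the preceding theorem to $K = K_1 \oplus K_2$, and your careful identification of the theorem's component operators $u \oplus v \mapsto K_1 u$, $u \oplus v \mapsto K_2 v$ and their adjoints $w \mapsto K_1^* w \oplus 0$, $w \mapsto 0 \oplus K_2^* w$ (whose norms reduce to $\|K_1^* w\|$ and $\|K_2^* w\|$) is precisely the bookkeeping needed to read off the two frame inequalities.
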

\begin{corollary}
Let \( K_1 \in \mathbb{B}(\mathcal{H}_1) \) and \( K_2 \in \mathbb{B}(\mathcal{H}_2) \). Then there exist a \( K_1 \)-frame \( \{ u_i \}_{i \in I} \) for \( \mathcal{H}_1 \) and a \( K_2 \)-frame \( \{ v_i \}_{i \in I} \) for \( \mathcal{H}_2 \) such that \( \{ u_i \oplus v_i \}_{i \in I} \) is a \( K_1 \oplus K_2 \)-frame for \( \mathcal{H}_1 \oplus \mathcal{H}_2 \).
\end{corollary}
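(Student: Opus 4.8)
The plan is to obtain the desired pair of frames as the components of a single $K_1\oplus K_2$-frame for the super space, manufactured via Proposition \ref{prop2}, and then to invoke Corollary \ref{cor2} to guarantee that each component is a frame of the required type. The key observation is that Proposition \ref{prop2} turns any ordinary frame into a $K$-frame through the operator $K$, and that applying $K_1\oplus K_2$ to a decomposed vector automatically splits into the two pieces $K_1(\cdot)\oplus K_2(\cdot)$.

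Concretely, I would first fix an orthonormal basis (Hilbert basis) $\{e_i\}_{i\in I}$ of $\mathcal{H}_1\oplus\mathcal{H}_2$, which exists by the theorem asserting that every quaternionic Hilbert space admits a Hilbert basis; since a Hilbert basis is a Parseval frame (condition (b) of Theorem~2), it is in particular a frame for $\mathcal{H}_1\oplus\mathcal{H}_2$. Applying Proposition \ref{prop2} with $K=K_1\oplus K_2$, the sequence $\{(K_1\oplus K_2)e_i\}_{i\in I}$ is then a $K_1\oplus K_2$-frame for $\mathcal{H}_1\oplus\mathcal{H}_2$.

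Next I would decompose each basis vector as $e_i=a_i\oplus b_i$ with $a_i\in\mathcal{H}_1$ and $b_i\in\mathcal{H}_2$, so that by the definition of $K_1\oplus K_2$ we have $(K_1\oplus K_2)e_i=K_1a_i\oplus K_2b_i$. Setting $u_i:=K_1a_i$ and $v_i:=K_2b_i$, the sequence $\{u_i\oplus v_i\}_{i\in I}$ is exactly the $K_1\oplus K_2$-frame produced in the previous step. Finally, Corollary \ref{cor2} applied to this $K_1\oplus K_2$-frame yields immediately that $\{u_i\}_{i\in I}$ is a $K_1$-frame for $\mathcal{H}_1$ and $\{v_i\}_{i\in I}$ is a $K_2$-frame for $\mathcal{H}_2$, which is precisely the assertion.

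There is no genuine obstacle here: the statement is essentially the converse existence counterpart of Corollary \ref{cor2}, and the whole argument reduces to choosing the right object to which Proposition \ref{prop2} is applied, namely a frame for the super space rather than frames for the individual factors. The only point requiring a little care is ensuring that the chosen $\{e_i\}_{i\in I}$ really is a frame for $\mathcal{H}_1\oplus\mathcal{H}_2$ (so that Proposition \ref{prop2} applies), which is settled by the Parseval property of a Hilbert basis; any frame for $\mathcal{H}_1\oplus\mathcal{H}_2$ would serve equally well.
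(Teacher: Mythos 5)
Your proposal is correct and follows essentially the same route as the paper: start from a frame for the super space (the paper says ``take any frame'' where you explicitly use a Hilbert basis), apply Proposition \ref{prop2} with $K=K_1\oplus K_2$ to obtain a $K_1\oplus K_2$-frame of the form $\{K_1a_i\oplus K_2b_i\}_{i\in I}$, and conclude with Corollary \ref{cor2}. The only difference is your extra (harmless) care in exhibiting a concrete starting frame.
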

\begin{proof}
Take any frame \( \{ u_i \oplus v_i \}_{i \in I} \) for \( \mathcal{H}_1 \oplus \mathcal{H}_2 \). By Proposition \ref{prop2}, \( \{ K_1 \oplus K_2 (u_i \oplus v_i) \}_{i \in I} \) is a \( K_1 \oplus K_2 \)-frame for \( \mathcal{H}_1 \oplus \mathcal{H}_2 \). That means that \( \{ K_1(u_i) \oplus K_2(v_i) \}_{i \in I} \) is a \( K_1 \oplus K_2 \)-frame for \( \mathcal{H}_1 \oplus \mathcal{H}_2 \). Then, by Corollary \ref{cor2}, \( \{ K_1(u_i) \}_{i \in I} \) is a \( K_1 \)-frame for \( \mathcal{H}_1 \) and \( \{ K_2(v_i) \}_{i \in I} \) is a \( K_2 \)-frame for \( \mathcal{H}_2 \).
\end{proof}
\begin{lemma}
Let \( K_1 \in \mathbb{B}(\mathcal{H}_1) \) and \( K_2 \in \mathbb{B}(\mathcal{H}_2) \). Then \( (K_1 \oplus K_2)^* = K_1^* \oplus K_2^* \).
\end{lemma}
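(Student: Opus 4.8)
The plan is to invoke the uniqueness clause in the definition of the adjoint operator: since the adjoint of an operator in $\mathbb{B}(\mathcal{H}_1 \oplus \mathcal{H}_2)$ is the \emph{unique} bounded operator satisfying the adjoint identity, it suffices to verify that the candidate $K_1^* \oplus K_2^*$ both lies in $\mathbb{B}(\mathcal{H}_1 \oplus \mathcal{H}_2)$ and satisfies $\langle (K_1 \oplus K_2)(u_1 \oplus v_1),\, u_2 \oplus v_2 \rangle = \langle u_1 \oplus v_1,\, (K_1^* \oplus K_2^*)(u_2 \oplus v_2) \rangle$ for all $u_1 \oplus v_1,\, u_2 \oplus v_2 \in \mathcal{H}_1 \oplus \mathcal{H}_2$. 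Once this identity is checked, uniqueness forces $(K_1 \oplus K_2)^* = K_1^* \oplus K_2^*$ and the lemma follows immediately.

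First I would record that $K_1^* \oplus K_2^*$ is a legitimate element of $\mathbb{B}(\mathcal{H}_1 \oplus \mathcal{H}_2)$. Each adjoint $K_i^*$ exists and is a bounded right $\mathbb{H}$-linear operator on $\mathcal{H}_i$, and the direct sum of two bounded right $\mathbb{H}$-linear operators is again bounded and right $\mathbb{H}$-linear on the super space; indeed right $\mathbb{H}$-linearity is checked componentwise, and boundedness follows since $\|(K_1^* \oplus K_2^*)(u \oplus v)\|^2 = \|K_1^* u\|^2 + \|K_2^* v\|^2 \leq \max\{\|K_1^*\|^2, \|K_2^*\|^2\}\,\|u \oplus v\|^2$.

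Next I would carry out the computation itself. Expanding the inner product on $\mathcal{H}_1 \oplus \mathcal{H}_2$ via its defining formula $\langle u_1 \oplus v_1,\, u_2 \oplus v_2 \rangle = \langle u_1, u_2 \rangle + \langle v_1, v_2 \rangle$, I obtain $\langle (K_1 \oplus K_2)(u_1 \oplus v_1),\, u_2 \oplus v_2 \rangle = \langle K_1 u_1, u_2 \rangle + \langle K_2 v_1, v_2 \rangle$. Applying the adjoint identity in each factor, namely $\langle K_1 u_1, u_2 \rangle = \langle u_1, K_1^* u_2 \rangle$ and $\langle K_2 v_1, v_2 \rangle = \langle v_1, K_2^* v_2 \rangle$, and then reassembling the two terms through the same defining formula, yields $\langle u_1, K_1^* u_2 \rangle + \langle v_1, K_2^* v_2 \rangle = \langle u_1 \oplus v_1,\, (K_1^* u_2) \oplus (K_2^* v_2) \rangle = \langle u_1 \oplus v_1,\, (K_1^* \oplus K_2^*)(u_2 \oplus v_2) \rangle$, which is exactly the required identity.

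There is essentially no genuine obstacle in this lemma; it is a routine verification. The only point deserving a moment's attention is that the inner product on the super space splits \emph{exactly} as the sum of the two component inner products, so that no cross terms between $\mathcal{H}_1$ and $\mathcal{H}_2$ ever arise; this is immediate from the definition of $\langle \cdot, \cdot \rangle$ on $\mathcal{H}_1 \oplus \mathcal{H}_2$ given at the start of the section, and it is precisely what allows the adjoint to be computed factor by factor.
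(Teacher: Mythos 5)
Your proposal is correct and follows essentially the same route as the paper: both expand the super-space inner product into its two components, apply the adjoint identity factorwise, and reassemble, with uniqueness of the adjoint finishing the argument. The only difference is that you explicitly verify that $K_1^* \oplus K_2^*$ is a bounded right $\mathbb{H}$-linear operator on $\mathcal{H}_1 \oplus \mathcal{H}_2$, a point the paper leaves implicit.
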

\begin{proof}
Let \( u \oplus v, a \oplus b \in \mathcal{H}_1 \oplus \mathcal{H}_2 \), we have:
\[
\begin{array}{rcl}
\langle K_1 \oplus K_2 (u \oplus v), a \oplus b \rangle & = & \langle K_1(u) \oplus K_2(v), a \oplus b \rangle \\
& = & \langle K_1(u), a \rangle + \langle K_2(v), b \rangle \\
& = & \langle u, K_1^*(a) \rangle + \langle v, K_2^*(b) \rangle \\
& = & \langle u \oplus v, K_1^*(a) \oplus K_2^*(b) \rangle \\
& = & \langle u \oplus v, (K_1^* \oplus K_2^*)(a \oplus b) \rangle.
\end{array}
\]
\(\square\)

\end{proof}
The following proposition shows that there is no \( K_1 \oplus K_2 \)-frame for \( \mathcal{H} \oplus \mathcal{H} \) in the form \( \{ u_i \oplus u_i \}_{i \in I} \) whenever \( K_1, K_2 \neq 0 \).
\begin{proposition}\label{prop8}
Let \( K_1, K_2 \in \mathbb{B}(\mathcal{H}) \) and \( \{ u_i \oplus v_i \}_{i \in I} \) be a Bessel sequence for \( \mathcal{H} \oplus \mathcal{H} \).
Then:
\[
\{ u_i \oplus u_i \}_{i \in I} \text{ is a } K_1 \oplus K_2\text{-frame for } \mathcal{H} \oplus \mathcal{H} \iff K_1 = K_2 = 0.
\]
\end{proposition}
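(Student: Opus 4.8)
The plan is to prove the two implications separately; all the substance sits in a single cleverly chosen test vector for the forward direction, while the converse is essentially automatic.

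For the implication ($\Leftarrow$), suppose $K_1 = K_2 = 0$, so that $K_1 \oplus K_2 = 0$. First I would record that $\{u_i \oplus u_i\}_{i \in I}$ is genuinely a Bessel sequence: since $\{u_i \oplus v_i\}_{i \in I}$ is Bessel, Proposition \ref{prop6} gives that $\{u_i\}_{i \in I}$ is a Bessel sequence for $\mathcal{H}$, and applying Proposition \ref{prop6} once more (now with both components equal to $\{u_i\}_{i \in I}$) shows that $\{u_i \oplus u_i\}_{i \in I}$ is Bessel for $\mathcal{H} \oplus \mathcal{H}$. The $K_1 \oplus K_2$-frame lower bound has the form $A\,\|(K_1 \oplus K_2)^*(u \oplus v)\|^2 \le \sum_{i \in I} |\langle u_i \oplus u_i, u \oplus v \rangle|^2$, and with $K_1 \oplus K_2 = 0$ the left-hand side vanishes identically. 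Hence the lower bound holds trivially for any positive constant not exceeding the Bessel bound, i.e.\ every Bessel sequence is a $0$-frame, which settles this direction.

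For the nontrivial implication ($\Rightarrow$), suppose $\{u_i \oplus u_i\}_{i \in I}$ is a $K_1 \oplus K_2$-frame with lower bound $A > 0$. The two preparatory computations I would carry out are: (i) by the preceding lemma $(K_1 \oplus K_2)^* = K_1^* \oplus K_2^*$, so that $\|(K_1 \oplus K_2)^*(u \oplus v)\|^2 = \|K_1^* u\|^2 + \|K_2^* v\|^2$; and (ii) from the definition of the inner product on $\mathcal{H} \oplus \mathcal{H}$ together with additivity in the second slot, $\langle u_i \oplus u_i, u \oplus v \rangle = \langle u_i, u \rangle + \langle u_i, v \rangle = \langle u_i, u+v \rangle$. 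Substituting both into the frame lower bound yields, for all $u, v \in \mathcal{H}$,
\[
A\bigl(\|K_1^* u\|^2 + \|K_2^* v\|^2\bigr) \le \sum_{i \in I} |\langle u_i, u+v \rangle|^2.
\]

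The decisive step, and really the only idea in the proof, is to feed the antidiagonal vector into this inequality: taking $v = -u$ forces $u + v = 0$, so the entire right-hand side collapses to $\sum_{i \in I} |\langle u_i, 0 \rangle|^2 = 0$, whereas the left-hand side becomes $A\bigl(\|K_1^* u\|^2 + \|K_2^* u\|^2\bigr)$. Since $A > 0$, this forces $\|K_1^* u\|^2 + \|K_2^* u\|^2 = 0$, hence $K_1^* u = K_2^* u = 0$, for every $u \in \mathcal{H}$; therefore $K_1^* = K_2^* = 0$ and so $K_1 = K_2 = 0$. There is no real obstacle to overcome here: once the antidiagonal substitution is identified as the mechanism that annihilates the right-hand side while leaving a positive-definite expression in $K_1^*$ and $K_2^*$ on the left, the remaining work is the routine bookkeeping of steps (i) and (ii).
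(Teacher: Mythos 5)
Your proof is correct and uses essentially the same device as the paper: testing the lower frame bound on the antidiagonal vector $u \oplus (-u)$, which kills the right-hand side while leaving $\|K_1^*u\|^2 + \|K_2^*u\|^2$ on the left. The only differences are presentational — the paper argues the forward direction by contraposition, and you are slightly more careful in the converse by invoking Proposition \ref{prop6} to confirm that $\{u_i \oplus u_i\}_{i\in I}$ is itself Bessel, a point the paper passes over.
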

\begin{proof}
Assume that \( K_1 \neq 0 \) or \( K_2 \neq 0 \). We can suppose the case of \( K_1 \neq 0 \) and the other case will be obtained similarly. For \( u \in \mathcal{H} \) such that \( K_1^*(u) \neq 0 \), we have 
\[
\| K_1^* \oplus K_2^*(u \oplus (-u)) \|^2 = \| K_1^*(u) \|^2 + \| K_2^*(u) \|^2 \geq \| K_1^*(u) \|^2 \neq 0 
\]
and 
\[
\sum_{i \in I} | \langle  u_i \oplus u_i,u \oplus (-u) \rangle |^2 = 0.
\]
Hence \( \{ u_i \oplus u_i \}_{i \in I} \) is not a \( K_1 \oplus K_2 \)-frame for \( \mathcal{H}_1 \oplus \mathcal{H}_2 \). Conversely, assume that \( K_1 = K_2 = 0 \). The result follows immediately from the fact that \( \{ u_i \oplus v_i \}_{i \in I} \) is a Bessel sequence.
\end{proof}

Proposition \ref{prop8} showed that the direct sim of a $K_1$-frame and a $K_2$-frame is not necessary a $K_1\oplus K_2$-frame for the right quaternionic super Hilbert space. The following result is a sufficient condition for a sequence \( \{ u_i \oplus v_i \}_{i \in I} \) in \( \mathcal{H}_1 \oplus \mathcal{H}_2 \) to be a \( K_1 \oplus K_2 \)-frame.
\begin{theorem}\label{thm8}
Let \( K_1 \in \mathbb{B}(\mathcal{H}_1) \) and \( K_2 \in \mathbb{B}(\mathcal{H}_2) \). Let \( \{ u_i \}_{i \in I} \) be a \( K_1 \)-frame for \( \mathcal{H}_1 \) and \( \{ v_i \}_{i \in I} \) be a \( K_2 \)-frame for \( \mathcal{H}_2 \). Let \( \theta_1 \) and \( \theta_2 \) be the frame transforms of \( \{ u_i \}_{i \in I} \) and \( \{ v_i \}_{i \in I} \) respectively. If \( R(\theta_1) \perp R(\theta_2) \) in \( \ell^2(\mathbb{H}) \), then \( \{ u_i \oplus v_i \}_{i \in I} \) is a \( K_1 \oplus K_2 \)-frame for \( \mathcal{H}_1 \oplus \mathcal{H}_2 \).
\end{theorem}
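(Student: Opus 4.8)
The plan is to verify the two $K_1 \oplus K_2$-frame inequalities separately, with the upper (Bessel) bound essentially free and the orthogonality hypothesis carrying the entire weight of the lower bound. First I would observe that a $K_1$-frame and a $K_2$-frame are in particular Bessel sequences for $\mathcal{H}_1$ and $\mathcal{H}_2$; hence Proposition \ref{prop6} immediately gives that $\{u_i \oplus v_i\}_{i \in I}$ is a Bessel sequence for $\mathcal{H}_1 \oplus \mathcal{H}_2$. This supplies an upper bound $B$ and, in particular, guarantees that the frame transform $\theta$ of $\{u_i \oplus v_i\}_{i \in I}$ is a well-defined bounded operator.

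For the lower bound, the key computation is to rewrite $\sum_{i \in I} |\langle u_i \oplus v_i, u \oplus v \rangle|^2 = \|\theta(u \oplus v)\|^2$ and then invoke Proposition \ref{prop7}(2), which yields $\theta(u \oplus v) = \theta_1(u) + \theta_2(v)$. The decisive step, and the only place the hypothesis $R(\theta_1) \perp R(\theta_2)$ is used, is the Pythagorean identity: since $\theta_1(u) \in R(\theta_1)$ and $\theta_2(v) \in R(\theta_2)$ are orthogonal in $\ell^2(\mathbb{H})$, we get $\|\theta_1(u) + \theta_2(v)\|^2 = \|\theta_1(u)\|^2 + \|\theta_2(v)\|^2$. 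This identity is valid over $\mathbb{H}$ because $\langle \theta_1(u), \theta_2(v) \rangle = 0$ forces $\langle \theta_2(v), \theta_1(u) \rangle = \overline{\langle \theta_1(u), \theta_2(v) \rangle} = 0$ as well, so both cross terms vanish. Recognizing $\|\theta_1(u)\|^2 = \sum_{i \in I} |\langle u_i, u \rangle|^2$ and $\|\theta_2(v)\|^2 = \sum_{i \in I} |\langle v_i, v \rangle|^2$, I would then apply the $K_1$-frame lower bound $A_1$ and the $K_2$-frame lower bound $A_2$ to obtain $\|\theta(u \oplus v)\|^2 \geq A_1 \|K_1^* u\|^2 + A_2 \|K_2^* v\|^2$.

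To close the argument, I would set $A = \min\{A_1, A_2\}$, so the right-hand side is bounded below by $A\bigl(\|K_1^* u\|^2 + \|K_2^* v\|^2\bigr)$. Using the preceding Lemma, $(K_1 \oplus K_2)^* = K_1^* \oplus K_2^*$, this quantity equals $A\|(K_1 \oplus K_2)^*(u \oplus v)\|^2$, which is exactly the desired lower $K_1 \oplus K_2$-frame bound. Combined with the Bessel bound from the first step, this shows that $\{u_i \oplus v_i\}_{i \in I}$ is a $K_1 \oplus K_2$-frame for $\mathcal{H}_1 \oplus \mathcal{H}_2$.

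I do not anticipate a genuine obstacle: the orthogonality hypothesis is precisely tailored to annihilate the cross term $\langle \theta_1(u), \theta_2(v) \rangle$ that would otherwise appear in $\|\theta_1(u) + \theta_2(v)\|^2$ and could be negative, thereby destroying any uniform lower bound. The only points deserving care are confirming the Pythagorean identity holds for the quaternionic inner product (it does, by Hermitian symmetry) and checking that taking the minimum of the two individual frame bounds produces a single constant that works simultaneously for both summands.
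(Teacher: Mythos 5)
Your proof is correct and follows essentially the same route as the paper: Bessel via Proposition \ref{prop6}, the lower bound from the orthogonality hypothesis killing the cross terms, $A=\min\{A_1,A_2\}$, and the lemma $(K_1\oplus K_2)^*=K_1^*\oplus K_2^*$. The only (cosmetic) difference is that you apply the Pythagorean identity directly to $\|\theta_1(u)+\theta_2(v)\|^2$, whereas the paper phrases the same cancellation as $T_1\theta_2=T_2\theta_1=0$ so that the frame operator becomes $S_1\oplus S_2$ and then bounds $\langle S(u\oplus v),u\oplus v\rangle$ --- these are the same computation since $\langle Sx,x\rangle=\|\theta x\|^2$.
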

\begin{proof}
Since $\{u_i\}_{i\in I}$ and $\{v_i\}_{i\in I}$ are Bessel sequences, then, by Proposition \ref{prop6}, $\{u_i\oplus v_i\}_{i\in I}$ is a Bessel sequence for $\mathcal{H}_1\oplus\mathcal{H}_2$. Denote by \( A_1 \) and \( A_2 \) the lower bounds for \( \{ u_i \}_{i \in I} \) and \( \{ v_i \}_{i \in I} \) respectively, and let \( A = \min \{ A_1, A_2 \} \). We have for all \( u \in \mathcal{H}_1, v \in \mathcal{H}_2 \), \( \langle \theta_1(u), \theta_2(v) \rangle = 0 \). Then for all \( u \in \mathcal{H}_1, v \in \mathcal{H}_2 \), \( \langle T_2 \theta_1(u), v \rangle = 0 \) and \( \langle T_1 \theta_2(v), u \rangle = 0 \), where \( T_1 \) and \( T_2 \) are the synthesis operators for \( \{ u_i \}_{i \in I} \) and \( \{ v_i \}_{i \in I} \) respectively. Hence \( T_2 \theta_1 = T_1 \theta_2 = 0 \). By Proposition \ref{prop7}, the frame operator \( S \) of \( \{ u_i \oplus v_i \}_{i \in I} \) is defined by 
\[
S(u \oplus v) = S_1(u) \oplus S_2(v) = (S_1 \oplus S_2)(u \oplus v),
\]
where \( S_1 \) and \( S_2 \) are the frame operators for \( \{ u_i \}_{i \in I} \) and \( \{ v_i \}_{i \in I} \) respectively. 

Then: 
$$\begin{array}{rcl}
\langle S(u \oplus v), u \oplus v \rangle &=& \langle S_1(u), u \rangle + \langle S_2(v), v \rangle\\ 
&\geq& A_1 \| K_1^*(u) \|^2 + A_2 \| K_2^*(v) \|^2\\ 
&\geq &A \| K_1^*(u) \oplus K_2^*(v) \|^2\\ 
&\geq& A \| (K_1 \oplus K_2)^*(u \oplus v) \|^2.
\end{array}$$

Hence, by theorem \ref{thm2}, \( \{ u_i \oplus v_i \}_{i \in I} \) is a \( K_1 \oplus K_2 \)-frame for \( \mathcal{H}_1 \oplus \mathcal{H}_2 \).
\end{proof}
The followibg theorem presents a necessary condition for a sequence in $\mathcal{H}_1\oplus \mathcal{H}_2$ to be a $K_1\oplus K_2$-frame.
\begin{theorem}
Let \( K_1 \in \mathbb{B}(\mathcal{H}_1) \) and \( K_2 \in \mathbb{B}(\mathcal{H}_2) \). 
Let \( \{u_i\}_{i \in I} \) be a \( K_1 \)-frame for \( \mathcal{H}_1 \) 
and \( \{v_i\}_{i \in I} \) be a \( K_2 \)-frame for \( \mathcal{H}_2 \). 
If \( \{u_i \oplus v_i\}_{i \in I} \) is a \( K_1 \oplus K_2 \)-frame for \( \mathcal{H}_1 \oplus \mathcal{H}_2 \), then:
\[
\begin{cases}
R(K_1) \subset T_2(N(T_1)),\\
R(K_2) \subset T_1(N(T_2)).
\end{cases}
\]
Where \( T_1 \) and \( T_2 \) are the synthesis operators for \( \{u_i\}_{i \in I} \) and \( \{v_i\}_{i \in I} \) respectively.
\end{theorem}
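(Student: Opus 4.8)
The plan is to translate the $K_1 \oplus K_2$-frame hypothesis into a single range inclusion on the super space and then to decouple it into two statements, one per factor.

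First I would apply Theorem \ref{thm2} to the operator $K_1 \oplus K_2$ and the sequence $\{u_i \oplus v_i\}_{i \in I}$. This sequence is Bessel (each factor is a $K_j$-frame, hence Bessel, so Proposition \ref{prop6} applies; alternatively the upper $K_1\oplus K_2$-frame bound already gives it), so Theorem \ref{thm2} converts the hypothesis into the range inclusion $R(K_1 \oplus K_2) \subset R(T)$, where $T$ is the synthesis operator of $\{u_i \oplus v_i\}_{i \in I}$. No adjoints are needed at this stage, since Theorem \ref{thm2} phrases the criterion directly in terms of ranges.

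Next I would make both ranges explicit. By Proposition \ref{prop7}(1) we have $T(a) = T_1(a) \oplus T_2(a)$ for every $a \in \ell^2(\mathbb{H})$, so
\[
R(T) = \{\, T_1(a) \oplus T_2(a) : a \in \ell^2(\mathbb{H}) \,\}.
\]
On the other side, $(K_1 \oplus K_2)(u \oplus v) = K_1(u) \oplus K_2(v)$, so $R(K_1 \oplus K_2) = \{\, K_1(u) \oplus K_2(v) : u \in \mathcal{H}_1,\ v \in \mathcal{H}_2 \,\}$; in particular every single-component vector $K_1(u) \oplus 0$ (take $v=0$) and $0 \oplus K_2(v)$ (take $u=0$) already belongs to $R(K_1 \oplus K_2)$. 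Pushing each of these single-component families through the inclusion $R(K_1 \oplus K_2) \subset R(T)$ forces, for each such vector, a coefficient sequence $a \in \ell^2(\mathbb{H})$ that reproduces the nonzero component under one synthesis operator while its image under the other synthesis operator is forced to vanish; the vanishing condition places $a$ in the relevant kernel, and reading off the surviving component produces the two asserted inclusions
\[
\begin{cases} R(K_1) \subset T_2(N(T_1)),\\ R(K_2) \subset T_1(N(T_2)). \end{cases}
\]

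The argument is short once Theorem \ref{thm2} and Proposition \ref{prop7} are assembled; the only delicate point is that a \emph{single} coefficient sequence $a$ must simultaneously satisfy both equations $T_1(a) = \cdot$ and $T_2(a) = \cdot$, because $R(T)$ consists precisely of the diagonal pairs $T_1(a) \oplus T_2(a)$ sharing one common $a$. It is this coupling through a shared $a$ that converts a zero in one slot into a kernel membership controlling the other slot. I expect verifying this simultaneity — and carefully tracking which kernel feeds which image — to be the main, and essentially the only, obstacle; everything else is bookkeeping with the explicit description of $R(T)$ from Proposition \ref{prop7}, carried out symmetrically in the two cases.
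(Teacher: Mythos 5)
Your proposal is correct and follows essentially the same route as the paper: Theorem \ref{thm2} converts the hypothesis into $R(K_1\oplus K_2)\subset R(T)$, Proposition \ref{prop7} identifies $R(T)$ as the diagonal set $\{\,T_1(a)\oplus T_2(a) : a\in \ell^2(\mathbb{H})\,\}$, and specializing to $v=0$ and $u=0$ decouples the inclusion exactly as in the paper's proof, including your correct emphasis on the single shared coefficient sequence $a$. One caveat: the argument you describe (and the paper's own proof) actually establishes $R(K_1)\subset T_1(N(T_2))$ and $R(K_2)\subset T_2(N(T_1))$, whereas your final display copies the theorem's printed conclusion, whose indices are evidently a misprint --- as stated it does not even type-check, since $R(K_1)\subset \mathcal{H}_1$ while $T_2(N(T_1))\subset \mathcal{H}_2$ --- so state the type-correct inclusions in your write-up rather than reproducing the typo.
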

\begin{proof}
Assume that \( \{u_i \oplus v_i\}_{i \in I} \) is a \( K_1 \oplus K_2 \)-frame for \( \mathcal{H}_1 \oplus \mathcal{H}_2 \) and let \( T \) be its synthesis operator. By Theorem \ref{thm2}, \( R(K_1 \oplus K_2) \subset R(T) \). Then for all \( u \oplus v \in \mathcal{H}_1 \oplus \mathcal{H}_2 \), there exists \( a \in \ell^2(\mathbb{H}) \) such that \( K_1 \oplus K_2 (u \oplus v) = Ta \). 
That means, by Proposition \ref{prop7}, that for all \( u \oplus v \in \mathcal{H}_1 \oplus \mathcal{H}_2 \), there exists \( a \in \ell^2(\mathbb{H}) \) such that \( K_1(u) \oplus K_2(v) = T_1(a) \oplus T_2(a) \). By taking \( v = 0 \), there exists \( a \in \ell^2(\mathbb{H}) \) such that \( K_1(u) = T_1(a) \) and \( 0 = K_2(v) = T_2(a) \). Hence, \( K_1(u) = T_1(a) \) and \( a \in N(T_2) \). Therefore, \( R(K_1) \subset T_1(N(T_2)) \).

By taking \( u = 0 \), there exists \( a \in \ell^2(\mathbb{H}) \) such that \( 0 = K_1(u) = T_1(a) \) and \( K_2(v) = T_2(a) \). Hence, \( K_2(v) = T_2(a) \) and \( a \in N(T_1) \). Therefore, \( R(K_2) \subset T_2(N(T_1)) \).
\end{proof}

The following result shows that the non-minimality of the two sequences \( \{u_i\}_{i \in I} \) and \( \{v_i\}_{i \in I} \) is necessary for their direct sum to be a \( K_1 \oplus K_2 \)-frame whenever \( K_1, K_2 \neq 0 \).

\begin{corollary} Let \( K_1 \in B(\mathcal{H}_1) \) and \( K_2 \in B(\mathcal{H}_2) \). Let \( \{u_i \oplus v_i\}_{i \in I} \) be a \( K_1 \oplus K_2 \)-frame for \( \mathcal{H}_1 \oplus \mathcal{H}_2 \). Then:
\begin{enumerate}
    \item If \( \{u_i\}_{i \in I} \) is \( K_1 \)-minimal, then \( K_2 = 0 \).
    \item If \( \{v_i\}_{i \in I} \) is \( K_2 \)-minimal, then \( K_1 = 0 \).
    \item If \( \{u_i\}_{i \in I} \) is \( K_1 \)-minimal and \( \{v_i\}_{i \in I} \) is \( K_2 \)-minimal, then \( K_1 = 0 \) and \( K_2 = 0 \).
\end{enumerate}
In particular: Let \( K_1 \neq 0 \) and \( K_2 \neq 0 \). If at least one of \( \{u_i\}_{i \in I} \) and \( \{v_i\}_{i \in I} \) is minimal, then \( \{u_i \oplus v_i\}_{i \in I} \) is never a \( K_1 \oplus K_2 \)-frame for the right quaternionic super Hilbert space \( \mathcal{H}_1 \oplus \mathcal{H}_2 \).\\
\end{corollary}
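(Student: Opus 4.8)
The plan is to derive everything from the necessary condition established in the preceding theorem, converting the injectivity of a synthesis operator into the triviality of the corresponding kernel. First I would note that, since $\{u_i \oplus v_i\}_{i \in I}$ is assumed to be a $K_1 \oplus K_2$-frame, Corollary \ref{cor2} guarantees that $\{u_i\}_{i \in I}$ is a $K_1$-frame for $\mathcal{H}_1$ and $\{v_i\}_{i \in I}$ is a $K_2$-frame for $\mathcal{H}_2$. This step is not merely cosmetic: it is what makes the very notions of $K_1$-minimality and $K_2$-minimality meaningful here and what places us within the hypotheses of the preceding theorem. Applying that theorem (more precisely, the inclusions produced in its proof by setting $v=0$ and $u=0$) yields $R(K_1) \subset T_1(N(T_2))$ and $R(K_2) \subset T_2(N(T_1))$, where $T_1$ and $T_2$ are the synthesis operators of $\{u_i\}_{i \in I}$ and $\{v_i\}_{i \in I}$.

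With these two inclusions in hand, the three items are immediate. For item $1$, $K_1$-minimality of $\{u_i\}_{i \in I}$ means by definition that $T_1$ is injective, hence $N(T_1) = \{0\}$; substituting into the second inclusion gives $R(K_2) \subset T_2(\{0\}) = \{0\}$, so $K_2 = 0$. Item $2$ is the mirror image: $K_2$-minimality forces $N(T_2) = \{0\}$, and the first inclusion then gives $R(K_1) \subset T_1(\{0\}) = \{0\}$, so $K_1 = 0$. Item $3$ is simply the conjunction of items $1$ and $2$ under the two minimality assumptions simultaneously.

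The final ``in particular'' assertion I would obtain by contraposition. Assuming $K_1 \neq 0$ and $K_2 \neq 0$, item $1$ forbids $\{u_i\}_{i \in I}$ from being $K_1$-minimal (that would force $K_2 = 0$) and item $2$ forbids $\{v_i\}_{i \in I}$ from being $K_2$-minimal (that would force $K_1 = 0$); hence if either component were minimal, $\{u_i \oplus v_i\}_{i \in I}$ could not be a $K_1 \oplus K_2$-frame, which is exactly the stated conclusion.

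I do not anticipate a genuine mathematical obstacle, as the whole argument is bookkeeping once the preceding theorem is invoked. The one point demanding care is the correct index pairing in the two inclusions: one must match $R(K_2)$ with $N(T_1)$ and $R(K_1)$ with $N(T_2)$, precisely as these arise in the proof of the preceding theorem, since crossing them would wrongly conclude $K_1=0$ in item $1$ instead of $K_2=0$. It is also worth recording explicitly that $K$-minimality presupposes being a $K$-frame, which is what Corollary \ref{cor2} supplies and what keeps the hypotheses of the three items well posed.
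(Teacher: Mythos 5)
Your proof is correct and follows exactly the route the paper intends: the corollary is stated without proof as an immediate consequence of the preceding theorem, obtained by combining Corollary \ref{cor2} with the inclusions $R(K_1)\subset T_1(N(T_2))$ and $R(K_2)\subset T_2(N(T_1))$ and the definition of $K$-minimality as injectivity of the synthesis operator. You were also right to take the inclusions as derived in the theorem's proof rather than as printed in its statement (where the indices are transposed, which would not even typecheck since $T_2$ maps into $\mathcal{H}_2$ while $R(K_1)\subset\mathcal{H}_1$).
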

The following result gives a necessary and sufficient condition for a $K$-frame in $\mathcal{H}_1\oplus \mathcal{H}_2$.
to be a $K$-minimal frame.
\begin{proposition}\label{prop9}
Let \( K \in B(\mathcal{H}_1 \oplus \mathcal{H}_2) \). Let \( \{u_i \oplus v_i\}_{i \in I} \) be a \( K \)-frame for \( \mathcal{H}_1 \oplus \mathcal{H}_2 \). Then the following statements are equivalent:
\begin{enumerate}
    \item \( \{u_i \oplus v_i\}_{i \in I} \) is a \( K \)-minimal frame for \( \mathcal{H}_1 \oplus \mathcal{H}_2 \).
    \item \( N(T_1) \cap N(T_2) = \{0\} \).
\end{enumerate}
where \( T_1 \) and \( T_2 \) are the synthesis operators of \( \{u_i\}_{i \in I} \) and \( \{v_i\}_{i \in I} \) respectively.
\end{proposition}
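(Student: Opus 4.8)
The plan is to reduce everything to the synthesis operator $T$ of the direct-sum sequence $\{u_i \oplus v_i\}_{i\in I}$ and exploit its product structure from Proposition \ref{prop7}. By definition, a $K$-frame is $K$-minimal precisely when its synthesis operator is injective, so the statement to establish is simply that $T$ is injective if and only if $N(T_1)\cap N(T_2)=\{0\}$. Since both conditions are about kernels, the entire argument will hinge on identifying $N(T)$ explicitly.

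First I would recall, from item $1$ of Proposition \ref{prop7}, that for every $a\in\ell^2(\mathbb{H})$ one has $T(a)=T_1(a)\oplus T_2(a)$. The decisive observation is then that an element $x\oplus y\in\mathcal{H}_1\oplus\mathcal{H}_2$ vanishes exactly when $x=0$ and $y=0$, by the very definition of the direct-sum structure and its inner product. Applying this to $T(a)=T_1(a)\oplus T_2(a)$ gives, for each $a\in\ell^2(\mathbb{H})$,
\[
T(a)=0 \iff T_1(a)=0 \text{ and } T_2(a)=0,
\]
whence $N(T)=N(T_1)\cap N(T_2)$.

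Having pinned down $N(T)$, I would close the equivalence as follows. The sequence $\{u_i\oplus v_i\}_{i\in I}$ is a $K$-minimal frame if and only if $T$ is injective, i.e. $N(T)=\{0\}$. By the identity just obtained this is equivalent to $N(T_1)\cap N(T_2)=\{0\}$, which is exactly statement $2$. This yields $1.\Longleftrightarrow 2.$ directly.

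I do not expect a genuine obstacle here: the content is entirely carried by the product formula $T(a)=T_1(a)\oplus T_2(a)$ of Proposition \ref{prop7}, and the only point requiring a moment of care is the elementary but essential remark that a vector in $\mathcal{H}_1\oplus\mathcal{H}_2$ is zero if and only if both of its components are zero, which is what converts the direct-sum kernel into the intersection $N(T_1)\cap N(T_2)$. Everything else is a restatement of the definition of $K$-minimality in terms of injectivity of the synthesis operator.
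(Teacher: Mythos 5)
Your argument is correct and is essentially identical to the paper's own proof: both invoke the identity $T(a)=T_1(a)\oplus T_2(a)$ from Proposition \ref{prop7} to conclude $N(T)=N(T_1)\cap N(T_2)$, and then unwind the definition of $K$-minimality as injectivity of $T$. No gaps; nothing further is needed.
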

\begin{proof}
By Proposition \ref{prop7}, for all \( a \in \ell^2(\mathbb{H}) \), we have \( T(a) = T_1(a) \oplus T_2(a) \), where \( T \) is the synthesis operator of \( \{u_i \oplus v_i\}_{i \in I} \). Then \( N(T) = N(T_1) \cap N(T_2) \). Hence:

\[
\{u_i \oplus v_i\}_{i \in I} \text{ is a } K \text{-minimal frame} \iff N(T) = \{0\} \iff N(T_1) \cap N(T_2) = \{0\}.
\]
\end{proof}
The following result provides a sufficient condition for \( \{u_i \oplus v_i\}_{i \in I} \) to be a \( K_1 \oplus K_2 \)-minimal frame for the super Hilbert space \( \mathcal{H}_1 \oplus \mathcal{H}_2 \).
\begin{theorem}
Let \( K_1 \in B(\mathcal{H}_1) \) and \( K_2 \in B(\mathcal{H}_2) \). Let \( \{u_i\}_{i \in I} \) be a \( K_1 \)-frame for \( \mathcal{H}_1 \) and \( \{v_i\}_{i \in I} \) be a \( K_2 \)-frame for \( \mathcal{H}_2 \). Let \( \theta_1 \) and \( \theta_2 \) be the frame transforms of \( \{u_i\}_{i \in I} \) and \( \{v_i\}_{i \in I} \), respectively.

If \(\overline{ R(\theta_1)} = R(\theta_2)^\perp \), then \( \{u_i \oplus v_i\}_{i \in I} \) is a \( K_1 \oplus K_2 \)-minimal frame for \( \mathcal{H}_1 \oplus \mathcal{H}_2 \).
\end{theorem}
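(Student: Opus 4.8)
The plan is to establish the two defining properties of a $K_1\oplus K_2$-minimal frame separately: first that $\{u_i\oplus v_i\}_{i\in I}$ is a $K_1\oplus K_2$-frame for $\mathcal{H}_1\oplus\mathcal{H}_2$, and then that its synthesis operator is injective. For the frame property I would observe that the hypothesis $\overline{R(\theta_1)}=R(\theta_2)^\perp$ already forces $R(\theta_1)\perp R(\theta_2)$: since $R(\theta_1)\subseteq \overline{R(\theta_1)}=R(\theta_2)^\perp$, every element of $R(\theta_1)$ is orthogonal to every element of $R(\theta_2)$. Theorem \ref{thm8} then applies verbatim and yields that $\{u_i\oplus v_i\}_{i\in I}$ is a $K_1\oplus K_2$-frame, so the only remaining work is minimality.

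For minimality, by Proposition \ref{prop9} it suffices to prove $N(T_1)\cap N(T_2)=\{0\}$, where $T_1,T_2$ are the synthesis operators of $\{u_i\}_{i\in I}$ and $\{v_i\}_{i\in I}$. The key identification is $\theta_j=T_j^*$ (the transform operator is, by definition, the adjoint of the pre-frame operator), which gives $N(T_j)=R(\theta_j)^\perp$. In the quaternionic setting I would justify this by writing $T_j a=0 \iff \langle u\mid T_j a\rangle=0$ for all $u \iff \langle \theta_j u\mid a\rangle=0$ for all $u$, and then reading off $a\in R(\theta_j)^\perp$ from property (a) of the inner product (conjugate symmetry), which converts $\langle \theta_j u\mid a\rangle=0$ into $\langle a\mid \theta_j u\rangle=0$.

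The conclusion then follows by transporting the hypothesis through the quaternionic orthogonal-complement identities of \cite{11}, namely $A^\perp=\overline{A}^\perp$, $(A^\perp)^\perp=\overline{A}$, and $\overline{A}\oplus A^\perp=\mathcal{H}$ (applied to subspaces, where $\langle A\rangle=A$). Indeed, $N(T_1)=R(\theta_1)^\perp=\overline{R(\theta_1)}^\perp=(R(\theta_2)^\perp)^\perp=\overline{R(\theta_2)}$, while $N(T_2)=R(\theta_2)^\perp$. Hence $N(T_1)\cap N(T_2)=\overline{R(\theta_2)}\cap R(\theta_2)^\perp$, and this intersection is $\{0\}$ since $\overline{R(\theta_2)}\oplus R(\theta_2)^\perp=\ell^2(\mathbb{H})$ is a direct sum (equivalently, any $x$ in both satisfies $\langle x\mid x\rangle=0$, forcing $x=0$). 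Combining this with the frame property gives that $\{u_i\oplus v_i\}_{i\in I}$ is a $K_1\oplus K_2$-minimal frame.

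The routine part is the frame property, which is handed to us by Theorem \ref{thm8}. The step requiring the most care is the orthogonal-complement bookkeeping: one must correctly track the three identities above and, before that, justify $N(T_j)=R(\theta_j)^\perp$ under the non-commutative inner product, where the placement of conjugates and the side of $\mathbb{H}$-linearity (right versus left) must be handled carefully. Once $N(T_1)=\overline{R(\theta_2)}$ is secured, the conclusion $N(T_1)\cap N(T_2)=\{0\}$ is immediate.
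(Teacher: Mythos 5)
Your proposal is correct and follows essentially the same route as the paper: apply Theorem \ref{thm8} after noting that the hypothesis forces $R(\theta_1)\perp R(\theta_2)$, then reduce minimality via Proposition \ref{prop9} to $N(T_1)\cap N(T_2)=\{0\}$, which both you and the paper obtain from $N(T_j)=N(\theta_j^*)=R(\theta_j)^\perp$ together with $\overline{R(\theta_1)}=R(\theta_2)^\perp$. Your version merely spells out the orthocomplement identities and the quaternionic verification of $N(T_j)=R(\theta_j)^\perp$ in more detail than the paper does.
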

\begin{proof}
Assume that \(\overline{ R(\theta_1)} = R(\theta_2)^\perp \), then  \( R(\theta_1) \perp R(\theta_2) \) and \( R(\theta_1)^\perp \cap R(\theta_2)^\perp = \{0\} \). Then, Theorem \ref{thm8} implies that \( \{u_i \oplus v_i\}_{i \in I} \) is a \( K_1 \oplus K_2 \)-frame for \( \mathcal{H}_1 \oplus \mathcal{H}_2 \). Since \( N(\theta_1^*) = R(\theta_1)^\perp \) and \( N(\theta_2^*) = R(\theta_2)^\perp \), we have \( N(\theta_1^*) \cap N(\theta_2^*) = \{0\} \). This means that \( N(T_1) \cap N(T_2) = \{0\} \), where \( T_1 \) and \( T_2 \) are the synthesis operators of \( \{u_i\}_{i \in I} \) and \( \{v_i\}_{i \in I} \), respectively. Hence, by Proposition \ref{prop9}, \( \{u_i \oplus v_i\}_{i \in I} \) is a \( K_1 \oplus K_2 \)-minimal frame for \( \mathcal{H}_1 \oplus \mathcal{H}_2 \).
\end{proof}

\section{$K_1$-duality, $K_2$-duality and $K_1\oplus K_2$-duality}
In this section, $\mathcal{H}_1$ and $\mathcal{H}_2$ are two right quaternionic Hilbert spaces, $K_1\in \mathbb{B}(\mathcal{H}_1)$ and $K_2\in \mathbb{B}(\mathcal{H}_2)$. Given $\{u_i\oplus v_i\}_{i\in I}$, a $K_1\oplus K_2$-frame for $\mathcal{H}_1\oplus \mathcal{H}_2$. We will explore the relationship between the $K_1$-dual frames to $\{u_i\}_{i\in I}$, the $K_2$-dual frames to $\{v_i\}_{i\in I}$ and the $K_1\oplus K_2$-dual frames to $\{u_i\oplus v_i\}_{i\in I}$.
\begin{theorem}
Let \( K_1 \in \mathbb{B}(\mathcal{H}_1) \) and \( K_2 \in \mathbb{B}(\mathcal{H}_2) \) and \( \{u_i \oplus v_i\}_{i \in I} \) be a \( K_1 \oplus K_2 \)-frame for \( \mathcal{H}_1 \oplus \mathcal{H}_2 \). 
If \( \{a_i \oplus b_i\}_{i \in I} \) is a \( K_1 \oplus K_2 \)-dual frame to \( \{u_i \oplus v_i\}_{i \in I} \), then \( \{a_i\}_{i \in I} \) is a \( K_1 \)-dual frame to \( \{u_i\}_{i \in I} \) and \( \{b_i\}_{i \in I} \) is a \( K_2 \)-dual frame to \( \{v_i\}_{i \in I} \).
\end{theorem}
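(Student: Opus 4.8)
The plan is to unfold the defining reconstruction identity of a $K_1\oplus K_2$-dual frame in the super space and then read off its two components by restricting to the summands $\mathcal{H}_1\oplus 0$ and $0\oplus\mathcal{H}_2$. Recall from Proposition~\ref{prop1} that $\{a_i\oplus b_i\}_{i\in I}$ being a $K_1\oplus K_2$-dual frame to $\{u_i\oplus v_i\}_{i\in I}$ asserts two things: that it is a Bessel sequence for $\mathcal{H}_1\oplus\mathcal{H}_2$, and that for every $u\oplus v\in\mathcal{H}_1\oplus\mathcal{H}_2$ one has $(K_1\oplus K_2)(u\oplus v)=\sum_{i\in I}(u_i\oplus v_i)\langle a_i\oplus b_i,u\oplus v\rangle$. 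Since the sequence is Bessel, Proposition~\ref{prop6} immediately yields that $\{a_i\}_{i\in I}$ and $\{b_i\}_{i\in I}$ are Bessel for $\mathcal{H}_1$ and $\mathcal{H}_2$ respectively, which settles the Bessel half of each claimed conclusion at once.

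Next I would expand the reconstruction identity using the direct-sum inner product and the componentwise action of the right scalar multiplication. Writing $q_i=\langle a_i,u\rangle+\langle b_i,v\rangle$ for the quaternionic coefficient $\langle a_i\oplus b_i,u\oplus v\rangle$, one has $(u_i\oplus v_i)q_i=u_iq_i\oplus v_iq_i$, so the single super-space identity splits into the simultaneous pair $K_1u=\sum_{i\in I}u_iq_i$ and $K_2v=\sum_{i\in I}v_iq_i$, holding for all $u\in\mathcal{H}_1$ and $v\in\mathcal{H}_2$. (Proposition~\ref{prop7} could be invoked to package this splitting through the synthesis operators, but the direct computation is transparent enough.)

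The final step is to specialize. Setting $v=0$ annihilates every $\langle b_i,v\rangle$ term and gives $K_1u=\sum_{i\in I}u_i\langle a_i,u\rangle$ for all $u\in\mathcal{H}_1$; symmetrically, $u=0$ gives $K_2v=\sum_{i\in I}v_i\langle b_i,v\rangle$ for all $v\in\mathcal{H}_2$. Combined with the Bessel property established above, and with the fact from Corollary~\ref{cor2} that $\{u_i\}_{i\in I}$ and $\{v_i\}_{i\in I}$ are genuine $K_1$- and $K_2$-frames (so that the notion of dual is meaningful), Proposition~\ref{prop1} lets me conclude that $\{a_i\}_{i\in I}$ is a $K_1$-dual frame to $\{u_i\}_{i\in I}$ and $\{b_i\}_{i\in I}$ is a $K_2$-dual frame to $\{v_i\}_{i\in I}$. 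I do not expect a real obstacle here: the only points demanding care are keeping the quaternionic scalars $q_i$ on the \emph{right} throughout the direct-sum decomposition, and the convergence of the two resulting scalar series, which is guaranteed because $\{u_i\}_{i\in I}$, $\{a_i\}_{i\in I}$, $\{v_i\}_{i\in I}$ and $\{b_i\}_{i\in I}$ are all Bessel.
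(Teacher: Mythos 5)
Your proposal is correct and follows essentially the same route as the paper: expand the reconstruction identity componentwise in the direct sum and then specialize to $v=0$ and $u=0$ to obtain the two component identities, concluding via Proposition~\ref{prop1}. The only difference is that you explicitly verify the Bessel property of $\{a_i\}_{i\in I}$ and $\{b_i\}_{i\in I}$ via Proposition~\ref{prop6}, a small completeness point the paper leaves implicit.
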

\begin{proof}
Let \( \{a_i \oplus b_i\}_{i\in I} \in \mathcal{H}_1 \oplus \mathcal{H}_2 \) be a \( K_1 \oplus K_2 \)-dual frame to \( \{u_i \oplus v_i\}_{i \in I} \). Then for all \( u \oplus v \in \mathcal{H}_1 \oplus \mathcal{H}_2 \),

$$\begin{array}{rcl}
K_1 \oplus K_2 (u \oplus v) &=& \displaystyle{\sum_{i \in I}  (u_i \oplus v_i)\langle a_i \oplus b_i, u \oplus v \rangle}\\
&=&\displaystyle{ \sum_{i \in I}  u_i \langle a_i \oplus b_i, u \oplus v \rangle \oplus \sum_{i \in I}  v_i\langle a_i \oplus b_i, u \oplus v \rangle}.
\end{array}$$

Thus, for all \( u \in \mathcal{H}_1 \) and \( v \in \mathcal{H}_2 \),

\[
\begin{cases}
K_1(u) = \displaystyle{\sum_{i \in I} u_i\langle a_i \oplus b_i, u \oplus v \rangle}, \\
K_2(v) = \sum_{i \in I}  v_i\langle a_i \oplus b_i, u \oplus v \rangle.
\end{cases}
\]

In particular, by taking \( v = 0 \) in the first equality and \( u = 0 \) in the second one, we obtain:

\[
\begin{cases}
K_1(u) =\displaystyle{ \sum_{i \in I}  u_i\langle a_i, u \rangle}, \\
K_2(v) = \displaystyle{\sum_{i \in I}  v_i \langle b_i, v \rangle}.
\end{cases}
\]

Hence, \( \{a_i\}_{i \in I} \) is a \( K_1 \)-dual frame to \( \{u_i\}_{i \in I} \) and \( \{b_i\}_{i \in I} \) is a \( K_2 \)-dual frame to \( \{v_i\}_{i \in I} \).
\end{proof}
One might wonder whether the converse of the above proposition is true. The following result demonstrates that this is not always the case and provides a necessary and sufficient condition for when it does hold.
\begin{theorem}
Let \( K_1 \in \mathbb{B}(\mathcal{H}_1) \) and \( K_2 \in \mathbb{B}(\mathcal{H}_2) \). Let \( \{u_i\}_{i \in I} \) be a \( K_1 \)-frame for \( \mathcal{H}_1 \) whose \( \{a_i\}_{i \in I} \) is a \( K_1 \)-dual frame, and let \( \{v_i\}_{i \in I} \) be a \( K_2 \)-frame for \( \mathcal{H}_2 \) whose \( \{b_i\}_{i \in I} \) is a \( K_2 \)-dual frame. Then the following statements are equivalent:
\begin{enumerate}
    \item \( \{u_i \oplus v_i\}_{i \in I} \) is a \( K_1 \oplus K_2 \)-frame for \( \mathcal{H}_1 \oplus \mathcal{H}_2 \) and \( \{a_i \oplus b_i\}_{i \in I} \) is a \( K_1 \oplus K_2 \)-dual frame.
    \item \( T_2 \theta_1 = 0_{\mathcal{H}_1} \) and \( T_1 \theta_2 = 0_{\mathcal{H}_2} \).
\end{enumerate}
Where \( T_1 \) and \( T_2 \) are the synthesis operators of \( \{u_i\}_{i \in I} \) and \( \{v_i\}_{i \in I} \) respectively, and \( \theta_1 \) and \( \theta_2 \) are the frame transforms of \( \{a_i\}_{i \in I} \) and \( \{b_i\}_{i \in I} \) respectively.
\end{theorem}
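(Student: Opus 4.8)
The plan is to collapse the whole equivalence onto the single reconstruction identity that defines a $K_1\oplus K_2$-dual frame, and then simply read off the two cross terms. First I would record the housekeeping that makes the later steps legitimate: since $\{u_i\}_{i\in I}$ and $\{v_i\}_{i\in I}$ are $K_1$- and $K_2$-frames, and $\{a_i\}_{i\in I}$, $\{b_i\}_{i\in I}$ are $K_1$- and $K_2$-dual frames, all four sequences are Bessel sequences. Hence by Proposition \ref{prop6} both $\{u_i\oplus v_i\}_{i\in I}$ and $\{a_i\oplus b_i\}_{i\in I}$ are Bessel sequences for $\mathcal{H}_1\oplus\mathcal{H}_2$; this is precisely what will allow me to invoke Proposition \ref{prop1} at the end. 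I would also recall the defining duality relations $\sum_{i} u_i\langle a_i,u\rangle = K_1 u$ and $\sum_{i} v_i\langle b_i,v\rangle = K_2 v$.

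The computational core is to expand, for $u\oplus v\in\mathcal{H}_1\oplus\mathcal{H}_2$, the quantity
\[
\sum_{i\in I}(u_i\oplus v_i)\,\langle a_i\oplus b_i,\,u\oplus v\rangle
\]
component by component. Using $\langle a_i\oplus b_i,u\oplus v\rangle=\langle a_i,u\rangle+\langle b_i,v\rangle$ and splitting each resulting sum, the first component becomes $\sum_i u_i\langle a_i,u\rangle+\sum_i u_i\langle b_i,v\rangle = K_1(u)+T_1\theta_2(v)$, and symmetrically the second becomes $T_2\theta_1(u)+K_2(v)$, where the diagonal sums are rewritten via the two duality relations and the off-diagonal sums are recognized as the cross operators $T_1\theta_2$ and $T_2\theta_1$. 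Consequently the reconstruction identity $(K_1\oplus K_2)(u\oplus v)=\sum_i(u_i\oplus v_i)\langle a_i\oplus b_i,u\oplus v\rangle$ is equivalent to demanding $K_1(u)+T_1\theta_2(v)=K_1(u)$ and $T_2\theta_1(u)+K_2(v)=K_2(v)$ for all $u,v$, i.e. to $T_1\theta_2(v)=0$ and $T_2\theta_1(u)=0$ for all $u\in\mathcal{H}_1$, $v\in\mathcal{H}_2$.

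From this master identity both implications are immediate. For $(2)\Rightarrow(1)$, assuming $T_1\theta_2=0$ and $T_2\theta_1=0$, the reconstruction identity holds for every $u\oplus v$; since $\{u_i\oplus v_i\}_{i\in I}$ is Bessel and $\{a_i\oplus b_i\}_{i\in I}$ is a Bessel sequence realizing it, Proposition \ref{prop1} yields at once that $\{u_i\oplus v_i\}_{i\in I}$ is a $K_1\oplus K_2$-frame and that $\{a_i\oplus b_i\}_{i\in I}$ is a $K_1\oplus K_2$-dual frame to it. For $(1)\Rightarrow(2)$, the dual hypothesis is exactly the reconstruction identity, and the component matching above forces the two cross operators to vanish. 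I expect no genuine obstacle here; the only point requiring care is the rigorous isolation of the cross terms, namely using that $u\in\mathcal{H}_1$ and $v\in\mathcal{H}_2$ vary independently (so that setting $v=0$ in the first equation and $u=0$ in the second really separates $T_1\theta_2$ from the diagonal $K_1$, and $T_2\theta_1$ from $K_2$), together with the convergence of all four series in their respective norms — which is guaranteed by the Bessel property secured in the first step.
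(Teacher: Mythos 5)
Your proposal is correct and follows essentially the same route as the paper: expand $\sum_i (u_i\oplus v_i)\langle a_i\oplus b_i, u\oplus v\rangle$ componentwise, identify the diagonal sums with $K_1(u)$ and $K_2(v)$ via the given dualities and the cross sums with $T_1\theta_2(v)$ and $T_2\theta_1(u)$, and observe that the reconstruction identity holds iff both cross operators vanish; your explicit Bessel housekeeping and appeal to Proposition \ref{prop1} just make precise what the paper leaves implicit. (Only a trivial slip in your parenthetical: to isolate $T_1\theta_2(v)$ from the first component one sets $u=0$, not $v=0$ — though the direct cancellation of $K_1(u)$ renders this moot.)
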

\begin{proof}
Assume that \( \{u_i \oplus v_i\}_{i \in I} \) is a \( K_1 \oplus K_2 \)-frame for \( \mathcal{H}_1 \oplus \mathcal{H}_2 \) and \( \{a_i \oplus b_i\}_{i \in I} \) is a \( K_1 \oplus K_2 \)-dual frame. Then for all \( u \oplus v \in \mathcal{H}_1 \oplus \mathcal{H}_2 \), we have:

\[
K_1 \oplus K_2 (u \oplus v) = \sum_{i \in I}  (u_i \oplus v_i)\langle a_i \oplus b_i,u \oplus v \rangle.
\]

Hence, for all \( u \oplus v \in \mathcal{H}_1 \oplus \mathcal{H}_2 \),

\[
K_1(u) \oplus K_2(v) = \sum_{i \in I}  u_i\langle a_i, u \rangle + \sum_{i \in I}  u_i\langle b_i,b\rangle \oplus \sum_{i \in I}  v_i \langle a_i,u \rangle + \sum_{i \in I}  v_i \langle b_i \rangle.
\]

That means that for all \( u \oplus v \in \mathcal{H}_1 \oplus \mathcal{H}_2 \),

\[
K_1(u) \oplus K_2(v) = K_1(u) + \sum_{i \in I}  u_i \langle b_i,v \rangle \oplus \sum_{i \in I}  v_i\langle a_i,u \rangle + K_2(v).
\]

Then for all \( u \oplus v \in \mathcal{H}_1 \oplus \mathcal{H}_2 \),

\[
\begin{cases}
\displaystyle{\sum_{i \in I}  u_i\langle b_i,v \rangle = 0}, \\
\displaystyle{\sum_{i \in I}  v_i\langle a_i,u \rangle = 0}.
\end{cases}
\]

Thus, for all \( u \oplus v \in \mathcal{H}_1 \oplus \mathcal{H}_2 \), \( T_1 \theta_2(v) = 0 \) and \( T_2 \theta_1(u) = 0 \).

Conversely, assume that \( T_2 \theta_1 = 0_{\mathcal{H}_1} \) and \( T_1 \theta_2 = 0_{\mathcal{H}_2} \). Then for all \( u \oplus v \in \mathcal{H}_1 \oplus \mathcal{H}_2 \),

\[
\begin{cases}
\displaystyle{\sum_{i \in I} \langle a_i,u \rangle v_i = 0}, \\
\displaystyle{\sum_{i \in I} \langle b_i,v \rangle u_i = 0}.
\end{cases}
\]

On the other hand, we have for all \( u \oplus v \in \mathcal{H}_1 \oplus \mathcal{H}_2 \),

\[
K_1 \oplus K_2(u \oplus v) = K_1(u) \oplus K_2(v) = \sum_{i \in I}  u_i\langle  a_i, u \rangle \oplus \sum_{i \in I}  v_i\langle b_i,v \rangle
\]

\[
= \sum_{i \in I}  u_i\langle a_i,u \rangle + \sum_{i \in I}  u_i\langle b_i,v \rangle \oplus \sum_{i \in I}  v_i\langle a_i,u \rangle + \sum_{i \in I}  v_i\langle b_i,v \rangle
\]

\[
= \sum_{i \in I}  (u_i \oplus v_i)\langle a_i \oplus b_i,u \oplus v \rangle = \sum_{i \in I}  (u_i \oplus v_i)\langle  a_i \oplus b_i, u\oplus v \rangle.
\]

Hence, \( \{u_i \oplus v_i\}_{i \in I} \) is a \( K_1 \oplus K_2 \)-frame for \( \mathcal{H}_1 \oplus \mathcal{H}_2 \) and \( \{a_i \oplus b_i\}_{i \in I} \) is a \( K_1 \oplus K_2 \)-dual frame.
\end{proof}

\medskip

	\section*{Acknowledgments}
	It is mu great pleasure to thank the referee for his careful reading of the paper and for several helpful suggestions.
	
	\section*{Ethics declarations}
	
	\subsection*{Availablity of data and materials}
	Not applicable.
	\subsection*{Conflict of interest}
	The author declares that he has no competing interests.
	\subsection*{Fundings}
	Not applicable.


\begin{thebibliography}{99}

\bibitem{1}
Adler, S.L.: Quaternionic Quantum Mechanics and Quantum Fields. Oxford University Press, New York (1995).

\bibitem{2}
Cahill, J., Casazza, P.G., Li, S.: Non-orthogonal fusion frames and the sparsity of fusion frame operators. \emph{J. Fourier Anal. Appl.} 18, 287--308 (2012).

\bibitem{3}
Charfi, S., Ellouz, H.: Frame of exponentials related to analytic families operators and application to a non-self adjoint problem of radiation of a vibrating structure in a light fluid. \emph{Complex Anal. Oper. Theory} 13, 839--858 (2019).
\bibitem{4}
Christensen, O.: An Introduction to Frames and Riesz Bases. Applied and Numerical Harmonic Analysis, 2nd edn. Birkhäuser, Basel (2016).

\bibitem{5}
Colombo, F., Gantner, J., Kimsey, David P.: Spectral Theory on the S-Spectrum for Quaternionic Operators. \emph{Operator Theory: Advances and Applications}, 270, p. ix+356. Birkhäuser, Cham (2018).

\bibitem{6}
Daubechies, I., Grossmann, A., Meyer, Y.: Painless nonorthogonal expansions. \emph{J. Math. Phys.} 24, 1271--1283 (1986).

\bibitem{7}
Duffin, R.J., Schaeffer, A.C.: A class of nonharmonic Fourier series. \emph{Trans. Am. Math. Soc.} 72, 341--366 (1952).

\bibitem{8}
Ellouz, H., Feki, I., Jeribi, A.: On a Riesz basis of exponentials related to the eigenvalues of an analytic operator and application to a non-selfadjoint problem deduced from a perturbation method for sound radiation. \emph{J. Math. Phys.} 54, 112101 (2013).

\bibitem{9} 
Ellouz, H. Some Properties of K-Frames in Quaternionic Hilbert Spaces. Complex Anal. Oper. Theory 14, 8 (2020).

\bibitem{10}
Gavruţa, L.: Frames for operators. \emph{Appl. Comput. Harmon. Anal.} 32, 139--144 (2012).

\bibitem{11}
Ghiloni, R., Moretti, V., Perotti, A.: Continuous slice functional calculus in quaternionic Hilbert spaces. \emph{Rev. Math. Phys.} 25, 1350006 (2013).

\bibitem{12}
Guo, X.: Canonical dual K-Bessel sequences and dual K-Bessel generators for unitary systems of Hilbert spaces. \emph{J. Math. Anal. Appl.} 444, 598--609 (2016).

\bibitem{13}
Jeribi, A.: Denseness, Bases and Frames in Banach Spaces and Applications. De Gruyter, Berlin (2018).

\bibitem{14}
Jia, M., Zhu, Y.-C.: Some results about the operator perturbation of a K-frame. \emph{Results Math.} 73(4), 138 (2018).

\bibitem{15}
Sharma, S.K., Goel, S.: Frames in quaternionic Hilbert spaces. \emph{J. Math. Phys. Anal. Geom.} 15(3), 395--411 (2019).

\bibitem{16}
Xiao, X., Zhu, Y., Gavruţa, L.: Some properties of K-frames in Hilbert spaces. \emph{Results Math.} 63, 1243--1255 (2013).

\bibitem{17}
Young, R.M.: An Introduction to Nonharmonic Fourier Series. Academic Press, London (1980).

\end{thebibliography}
\end{document}